\providecommand{\U}[1]{\protect\rule{.1in}{.1in}}
\theoremstyle{plain}
\newtheorem{theorem}{Theorem}
\newtheorem{corollary}[theorem]{Corollary}
\newtheorem{lemma}[theorem]{Lemma}
\newtheorem{proposition}[theorem]{Proposition}
\newtheorem{fact}[theorem]{Fact}
\theoremstyle{definition}
\theoremstyle{remark}
\newtheorem{remark}[theorem]{Remark}
\numberwithin{equation}{section}
\numberwithin{theorem}{section}
\begin{document}
\title{Axiomatizations and factorizations of\\Sugeno utility functions}
\author{Miguel Couceiro}
\address[Miguel Couceiro]{Mathematics Research Unit, FSTC, University of Luxembourg \\
6, rue Coudenhove-Kalergi, L-1359 Luxembourg, Luxembourg}
\email{miguel.couceiro[at]uni.lu }
\author{Tam\'as Waldhauser}
\address[Tam\'as Waldhauser]{Mathematics Research Unit, FSTC, University of Luxembourg \\
6, rue Coudenhove-Kalergi, L-1359 Luxembourg, Luxembourg, and Bolyai
Institute, University of Szeged, Aradi v\'{e}rtan\'{u}k tere 1, H-6720 Szeged, Hungary}
\email{twaldha@math.u-szeged.hu}
\maketitle

\begin{abstract}
In this paper we consider a multicriteria aggregation model where local
utility functions of different sorts are aggregated using Sugeno integrals,
and which we refer to as Sugeno utility functions. We propose a general
approach to study such functions via the notion of pseudo-Sugeno integral (or,
equivalently, pseudo-polynomial function), which naturally generalizes that of
Sugeno integral, and provide several axiomatizations for this class of functions.

Moreover, we address and solve the problem of factorizing a Sugeno utility
function as a composition $q(\varphi_{1}(x_{1}),\ldots,\varphi_{n}(x_{n}))$ of
a Sugeno integral $q$ with local utility functions $\varphi_{i}$, if such a
factorization exists.


\end{abstract}

\section{Introduction}

The importance of aggregation functions is made apparent by their wide use,
not only in pure mathematics (e.g., in the theory of functional equations,
measure and integration theory), but also in several applied fields such as
operations research, computer and information sciences, economics and social
sciences, as well as in other experimental areas of physics and natural
sciences. For general background, see~\cite{BelPraCal07,GraMurSug00} and for a
recent reference, see~\cite{GraMarMesPap09}.

In many applications, the values to be aggregated are first to be transformed
by mappings $\varphi_{i}\colon X_{i}\to Y$, $i=1, \ldots,n$, so that the
transformed values (which are usually real numbers) can be aggregated in a
meaningful way by a function $M\colon Y^{n}\to Y$. The resulting composed
function $U\colon X_{1}\times\cdots\times X_{n}\to Y$ is then defined by
\begin{equation}
\label{eq:OverPrefFun}U(x_{1},\ldots,x_{n})=M(\varphi_{1}(x_{1}),\ldots
,\varphi_{n}(x_{n})).
\end{equation}
Such an aggregation model is used for instance in multicriteria decision
making where the criteria are not commensurate. Here each $\varphi_{i}$ is a
local utility function, i.e., order-preserving mapping, and the resulting
function $U$ is referred to as an overall utility function (also called global
preference function). For general background see \cite{BouDubPraPir09}.

In this paper, we consider this aggregation model in a purely ordinal decision
setting, where $Y$ and each $X_{i}$ are bounded chains $L$ and $L_{i}$,
respectively, and where $M\colon L^{n}\to L$ is a Sugeno integral (see
\cite{DubMarPraRouSab01,Sug74,Sug77}) or, more generally, a lattice polynomial
function. We refer to the resulting compositions as pseudo-Sugeno integrals
and pseudo-polynomial functions, respectively. The particular case when each
$L_{i}$ is the same chain $L^{\prime}$, and each $\varphi_{i}$ is the same
mapping $\varphi\colon L^{\prime}\to L$, was studied in \cite{CouMar3} where
the corresponding compositions $U=M\circ\varphi$ were called quasi-Sugeno
integrals and quasi-polynomial functions. Such mappings were characterized as
solutions of certain functional equations and in terms of necessary and
sufficient conditions which have natural interpretations in decision making
and aggregation theory.

Here, we take a similar approach and study pseudo-Sugeno integrals from an
axiomatic point of view, and seek necessary and sufficient conditions for a
given function to be factorizable as a composition of a Sugeno integral with
unary maps. The importance of such an axiomatization is attested by the fact
that this framework subsumes the Sugeno utility model. Since overall utility
functions (\ref{eq:OverPrefFun}) where $M$ is a Sugeno integral, coincide
exactly with order-preserving pseudo-Sugeno integrals (see
Corollary~\ref{cor SUF=OP PSI}), we are particularly interested in the case
when the inner mappings $\varphi_{i}$ are local utility functions.

As mentioned, this aggregation model is deeply rooted in multicriteria
decision making, where the variables $x_{i}$ represent different properties of
the alternatives (e.g., price, speed, safety, comfort level of a car), and the
overall utility function assigns a score to the alternatives that helps the
decision maker to choose the best one (e.g., to choose the car to buy). A
similar situation is that of subjective evaluation (see \cite{BouDubPraPir09}%
): $f$ outputs the overall rating of a certain product by customers, and the
variables $x_{i}$ represent the various properties of that product. The way in
which these properties influence the overall rating can give information about
the attitude of the customers. A factorization of the (empirically) given
overall utility function $f$ in the form (\ref{eq:OverPrefFun}) can be used
for such an analysis; this is our main motivation to also address this problem.

The paper is organized as follows. In Section~\ref{Sec:2} we recall the basic
definitions and terminology, as well as the necessary results concerning
polynomial functions (and, in particular, Sugeno integrals) used in the
sequel. In Section~\ref{Sec:3}, we focus on pseudo-Sugeno integrals as a tool
to study certain overall utility functions. We introduce the notion of
pseudo-polynomial function in Subsection~\ref{pseudo} and show that, even
though seemingly more general, it can be equivalently defined in terms of
Sugeno integrals. An axiomatization of this class of generalized polynomial
functions is given in Subsection~\ref{carPseudo}. Sugeno utility functions are
introduced in Subsection~\ref{overall}, as certain order-preserving
pseudo-Sugeno integrals, and then characterized in Subsection~\ref{overall2}
by means of necessary and sufficient conditions which extend well-known
properties in aggregation function theory. Within this general setting for
studying Sugeno utility functions, it is natural to consider the inverse
problem which asks for factorizations of a Sugeno utility function as a
composition of a Sugeno integral with local utility functions. This question
is addressed in Section~\ref{sect construction}, where an algorithmic
procedure is provided for constructing these factorizations of Sugeno utility
functions. We present the algorithm in Subsection~\ref{subsect algo}, which is
illustrated by a concrete example in Subsection~\ref{subsect example}, and in
Subsection~\ref{subsect correct} we show that this procedure does indeed
produce the desired factorizations.

This manuscript is an extended version of \cite{CW1} and \cite{CW2}, whose
results were presented at the conference MDAI 2010.

\section{Lattice polynomial functions and Sugeno integrals}

\label{Sec:2}

\subsection{Preliminaries}

Throughout this paper, let $L$ be an arbitrary bound\-ed chain endowed with
lattice operations $\wedge$ and $\vee$, and with least and greatest elements
$0_{L}$ and $1_{L}$, respectively; the subscripts may be omitted when the
underlying lattice is clear from the context. A subset $S$ of a chain $L$ is
said to be \emph{convex} if for every $a,b\in S$ and every $c\in L$ such that
$a\leq c\leq b$, we have $c\in S$. For any subset $S\subseteq L$, we denote by
$\mathop{\rm cl}\nolimits(S)$ the convex hull of $S$, that is, the smallest
convex subset of $L$ containing $S$. For instance, if $a,b\in L$ such that
$a\leq b$, then $\mathop{\rm cl}\nolimits(\{a,b\})=[a,b]=\{c\in L:a\leq c\leq
b\}.$

For an integer $n\geq1$, we set $[n]=\{1,\ldots,n\}$. Let $\sigma$ be a
permutation on $[n]$. The \emph{standard simplex} of $L^{n}$ associated with
$\sigma$ is the subset $L_{\sigma}^{n}\subseteq L^{n}$ defined by
\[
L_{\sigma}^{n}=\{\mathbf{x}\in L^{n}\colon x_{\sigma(1)}\leq x_{\sigma(2)}%
\leq\cdots\leq x_{\sigma(n)}\}.
\]
Two tuples are said to be \emph{comonotonic}, if there is a standard simplex
containing both of them.

Given arbitrary bounded chains $L_{i}$, $i\in\lbrack n]$, their Cartesian
product $\prod_{i\in\lbrack n]}L_{i}$ constitutes a bounded distributive
lattice by defining
\[
\mathbf{a}\wedge\mathbf{b}=(a_{1}\wedge b_{1},\ldots,a_{n}\wedge b_{n}%
),\quad\mbox{and}\quad\mathbf{a}\vee\mathbf{b}=(a_{1}\vee b_{1},\ldots
,a_{n}\vee b_{n}).
\]
For $k\in\left[  n\right]  $ and $c\in L_{k}$, we use $\mathbf{x}_{k}^{c}$ to
denote the tuple whose $i$th component is $c$, if $i=k$, and $x_{i}$, otherwise.

For $c\in L$ and $\mathbf{x}\in L^{n}$, let $\mathbf{x}\wedge c=(x_{1}\wedge
c,\ldots,x_{n}\wedge c)$ and $\mathbf{x}\vee c=(x_{1}\vee c,\ldots,x_{n}\vee
c),$ and denote by $[\mathbf{x}]_{c}$ the $n$-tuple whose $i$th component is
$0$, if $x_{i}\leq c$, and $x_{i}$, otherwise, and by $[\mathbf{x}]^{c}$ the
$n$-tuple whose $i$th component is $1$, if $x_{i}\geq c$, and $x_{i}$, otherwise.

Let $f\colon\prod_{i\in\lbrack n]}L_{i}\rightarrow L$ be a function. The
\emph{range} of $f$ is given by ${\mathrm{ran}}(f)=\{f(\mathbf{x}%
):\mathbf{x}\in\prod_{i\in\lbrack n]}L_{i}\}$. Also, $f$ is said to be
\emph{order-preserving} if, for every $\mathbf{a},\mathbf{b}\in\prod
_{i\in\lbrack n]}L_{i}$ such that $\mathbf{a}\leq\mathbf{b}$, we have
$f(\mathbf{a})\leq f(\mathbf{b})$. A well-known example of an order-preserving
function is the \emph{median} function $\mathop{\rm med}\nolimits\colon
L^{3}\rightarrow L$ given by
\[
\mathop{\rm med}\nolimits(x_{1},x_{2},x_{3})=(x_{1}\wedge x_{2})\vee
(x_{1}\wedge x_{3})\vee(x_{2}\wedge x_{3}).
\]
Given a tuple $\mathbf{x}\in L^{m}$, $m\geq1$, set $\langle\mathbf{x}%
\rangle_{f}=\mathop{\rm med}\nolimits(f(\mathbf{0}),\mathbf{x},f(\mathbf{1}))$.

\subsection{Basic background on polynomial functions and Sugeno integrals}

In this subsection we recall some well-known results concerning polynomial
functions that will be needed hereinafter. For further background, we refer
the reader to \cite{BurSan81,CouMar0,CouMar1,CouMar2,Goo67,Grae03,Rud01}.

Recall that a (\emph{lattice}) \emph{polynomial function} on $L$ is any map
$p\colon L^{n}\to L$ which can be obtained as a composition of the lattice
operations $\wedge$ and $\vee$, the projections $\mathbf{x}\mapsto x_{i}$ and
the constant functions $\mathbf{x}\mapsto c$, $c\in L$.

Polynomial functions are known to generalize certain prominent fuzzy
integrals, namely, the so-called (\emph{discrete}) \emph{Sugeno integrals}.
Indeed, as observed in \cite{Mar00,Mar09}, Sugeno integrals coincide exactly
with those polynomial functions $q:L^{n}\rightarrow L$ which are
\emph{idempotent}, that is, which satisfy $q(c,\ldots,c)=c$, for every $c\in
L$. In particular we have ${\mathrm{ran}}(q)=L$. We shall take this as our
working definition of the Sugeno integral; for the original definition (as an
integral with respect to a fuzzy measure) see, e.g.,
\cite{GraMarMesPap09,Sug74,Sug77}.

As shown by Goodstein \cite{Goo67}, polynomial functions over bounded
distributive lattices (in particular, over bounded chains) have very neat
normal form representations. For $I\subseteq\lbrack n]$, let $\mathbf{e}_{I}$
be the \emph{characteristic vector} of $I$, i.e., the $n$-tuple in $L^{n}$
whose $i$th component is $1$ if $i\in I$, and 0 otherwise.

\begin{theorem}
[Goodstein \cite{Goo67}]\label{prop:DNF(f)} A function $p\colon L^{n}%
\rightarrow L$ is a polynomial function if and only if
\begin{equation}
p(x_{1},\ldots,x_{n})=\bigvee_{I\subseteq\lbrack n]}\big(p(\mathbf{e}%
_{I})\wedge\bigwedge_{i\in I}x_{i}\big). \label{eq:Good}%
\end{equation}
Furthermore, the function given by \textup{(\ref{eq:Good})} is a Sugeno
integral if and only if $p(\mathbf{0})=0$ and $p(\mathbf{1})=1$.
\end{theorem}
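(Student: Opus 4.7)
The plan has three components. First, the \emph{if} direction is immediate: the right-hand side of (\ref{eq:Good}) is built from constants $p(\mathbf{e}_I)\in L$, projections, and the operations $\wedge,\vee$, hence is a polynomial function by definition. The content lies in the \emph{only if} direction and in the Sugeno integral clause.

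For the \emph{only if} direction, I would first argue by structural induction that every polynomial function $p$ admits a disjunctive normal form
$$p(\mathbf{x})=\bigvee_{I\subseteq[n]}\Bigl(c_{I}\wedge\bigwedge_{i\in I}x_{i}\Bigr)$$
for some constants $c_{I}\in L$. Projections and constants are already in this shape (take $c_{\{i\}}=1$ for a projection, $c_{\emptyset}=c$ for a constant); closure under $\vee$ is trivial; closure under $\wedge$ follows from distributivity of the chain $L$ together with the identity $\bigl(\bigwedge_{i\in I}x_{i}\bigr)\wedge\bigl(\bigwedge_{i\in J}x_{i}\bigr)=\bigwedge_{i\in I\cup J}x_{i}$, which collapses the meet of two DNFs back into one. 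Next I would evaluate at $\mathbf{e}_{J}$: the $I$-th term reduces to $c_{I}$ when $I\subseteq J$ and to $0$ otherwise, so $p(\mathbf{e}_{J})=\bigvee_{I\subseteq J}c_{I}\geq c_{J}$. Hence replacing each $c_{I}$ by $p(\mathbf{e}_{I})$ only enlarges the DNF expression. For the reverse inequality I would expand
$$\bigvee_{I}\Bigl(p(\mathbf{e}_{I})\wedge\bigwedge_{i\in I}x_{i}\Bigr)=\bigvee_{I}\bigvee_{J\subseteq I}\Bigl(c_{J}\wedge\bigwedge_{i\in I}x_{i}\Bigr)$$
and use $\bigwedge_{i\in I}x_{i}\leq\bigwedge_{i\in J}x_{i}$ whenever $J\subseteq I$ to bound this above by $\bigvee_{J}\bigl(c_{J}\wedge\bigwedge_{i\in J}x_{i}\bigr)=p(\mathbf{x})$, which yields (\ref{eq:Good}).

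For the Sugeno integral clause, idempotence trivially forces $p(\mathbf{0})=0$ and $p(\mathbf{1})=1$. Conversely, assuming those two boundary conditions I would evaluate (\ref{eq:Good}) at the constant tuple $(c,\ldots,c)$: the $I=\emptyset$ term contributes $p(\mathbf{0})\wedge1=0$, while each $I\neq\emptyset$ contributes $p(\mathbf{e}_{I})\wedge c$. Distributivity then gives $p(c,\ldots,c)=\bigl(\bigvee_{I\neq\emptyset}p(\mathbf{e}_{I})\bigr)\wedge c$, and the inner join dominates $p(\mathbf{e}_{[n]})=p(\mathbf{1})=1$, so $p(c,\ldots,c)=c$, establishing idempotence.

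The main technical obstacle is the DNF induction step: keeping track of coefficients when distributing a meet across the join of two normal forms requires some bookkeeping, but is a routine computation in the distributive chain $L$.
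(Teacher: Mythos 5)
Your proof is correct. Note that the paper itself does not prove this statement; it is quoted as Goodstein's theorem with a citation to \cite{Goo67}, so there is no in-paper argument to compare against. Your route is the standard one: a term induction producing a disjunctive normal form $\bigvee_{I}\bigl(c_{I}\wedge\bigwedge_{i\in I}x_{i}\bigr)$ (closure under $\wedge$ via distributivity and the merge $I\cup J$ of index sets), followed by the normalization of coefficients using $p(\mathbf{e}_{J})=\bigvee_{I\subseteq J}c_{I}$ and the two inequalities that replace $c_{I}$ by $p(\mathbf{e}_{I})$ without changing the function; the idempotence argument for the Sugeno clause, evaluating at $(c,\ldots,c)$ and using $p(\mathbf{e}_{[n]})=1$ together with the empty-meet convention $\bigwedge_{i\in\emptyset}x_{i}=1$, is also sound and matches the paper's working definition of a Sugeno integral as an idempotent polynomial function. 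The only bookkeeping you wave at (recollecting coefficients after distributing a meet of two DNFs) is indeed routine, since only finite distributivity in the bounded chain $L$ is needed.
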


\begin{remark}
\label{rem:Good} Observe that, by Theorem~\ref{prop:DNF(f)}, every polynomial
function $p\colon L^{n}\rightarrow L$ is uniquely determined by its
restriction to $\{0,1\}^{n}$. Also, since every lattice polynomial function is
order-preserving, we have that the coefficients in (\ref{eq:Good}) are
monotone increasing, i.e., $p(\mathbf{e}_{I})\leq p(\mathbf{e}_{J})$ whenever
$I\subseteq J$. Moreover, a function $f\colon\{0,1\}^{n}\rightarrow L$ can be
extended to a polynomial function over $L$ if and only if it is order-preserving.
\end{remark}

\begin{remark}
It follows from Goodstein's theorem that every unary polynomial function is of
the form
\begin{equation}
p(x)=s\vee(x\wedge t)=\mathop{\rm med}\nolimits(s,x,t)=\left\{
\begin{array}
[c]{rl}%
s,~ & \mbox{if }x<s,\\
x,~ & \mbox{if }x\in\lbrack s,t],\\
t,~ & \mbox{if }t<x,
\end{array}
\right.  \label{eq unary pol}%
\end{equation}
where $s=p(0),t=p(1)$. In other words, $p(x)$ is a truncated identity
function. Figure~\ref{fig1} shows the graph of this function in the case when
$L$ is the real unit interval $[0,1]$.
\end{remark}

%

\begin{figure}
[ptb]
\begin{center}
\includegraphics[
natheight=5.258300cm,
natwidth=5.362000cm,
height=5.2313cm,
width=5.3349cm
]%
{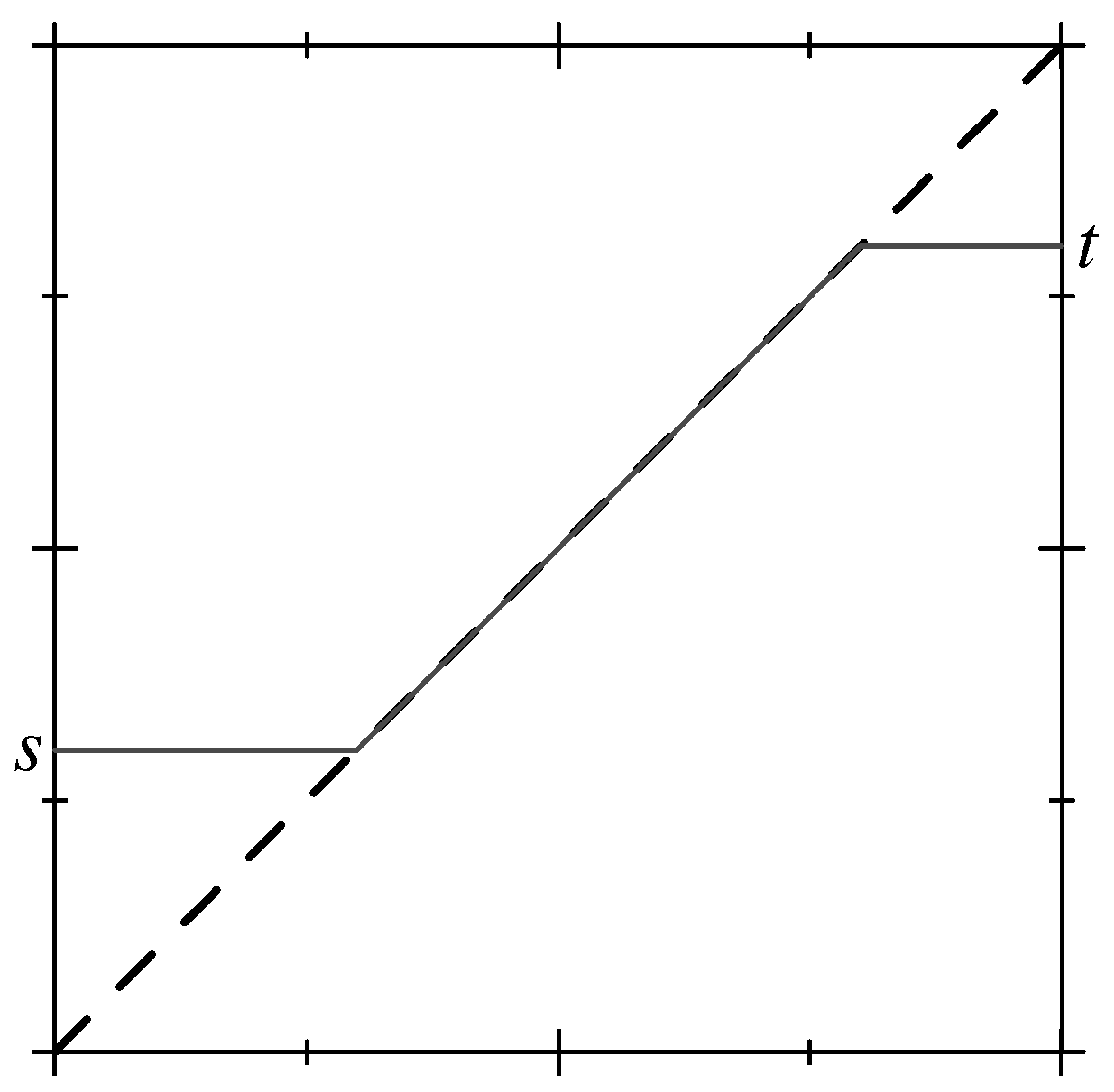}%
\caption{A typical unary polynomial function}%
\label{fig1}%
\end{center}
\end{figure}

It is noteworthy that every polynomial function $p$ as in (\ref{eq:Good}) can
be represented by $p=\langle q\rangle_{p}$ where $q$ is the Sugeno integral
given by
\begin{align*}
q(x_{1}, \ldots,x_{n}) = \bigvee_{\emptyset\subsetneq I \subsetneq[n]}
\big( p(\mathbf{e}_{I}) \wedge\bigwedge_{i \in I} x_{i} \big)\vee\bigwedge_{i
\in[n]} x_{i}.
\end{align*}

\subsection{Characterizations of polynomial functions}

\label{subsec:Pol}

The following results reassemble the various characterizations of polynomial
functions obtained in \cite{CouMar0}. For further background see, e.g.,
\cite{CouMar1,CouMar2,GraMarMesPap09}.

\begin{theorem}
\label{mainChar} Let $p\colon L^{n}\rightarrow L$ be a function on an
arbitrary bounded chain $L$. The following conditions are equivalent:

\begin{enumerate}
\item[$(i)$] $p$ is a polynomial function.

\item[$(ii)$] $p$ is median decomposable, that is, for every $\mathbf{x}\in
L^{n}$,
\[
p(\mathbf{x})=\mathop{\rm med}\nolimits\big(p(\mathbf{x}_{k}^{0}%
),x_{k},p(\mathbf{x}_{k}^{1})\big)\qquad(k=1,\ldots,n).
\]

\item[$(iii)$] $p$ is order-preserving, and
$\mathop{\rm cl}\nolimits({\mathrm{ran}}(p))$-min and
$\mathop{\rm cl}\nolimits({\mathrm{ran}}(p))$-max homogeneous, that is, for
every $\mathbf{x}\in L^{n}$ and every $c\in
\mathop{\rm cl}\nolimits({\mathrm{ran}}(p))$,
\[
p(\mathbf{x}\wedge c)=p(\mathbf{x})\wedge c\quad\mbox{and}\quad p(\mathbf{x}%
\vee c)=p(\mathbf{x})\vee c,\quad\mbox{resp.}
\]

\item[$(iv)$] $p$ is order-preserving, range-idempotent, and horizontally
minitive and maxitive, that is, for every $\mathbf{x}\in L^{n}$ and every
$c\in L$,
\[
p(\mathbf{x})=p(\mathbf{x}\vee c)\wedge p([\mathbf{x}]^{c})\quad
\mbox{and}\quad p(\mathbf{x})=p(\mathbf{x}\wedge c)\vee p([\mathbf{x}%
]_{c}),\quad\mbox{resp.}
\]

\end{enumerate}
\end{theorem}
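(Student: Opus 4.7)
The plan is to prove the equivalences by a cycle centred on Goodstein's normal form (\ref{eq:Good}), namely (i) $\Leftrightarrow$ (ii), (i) $\Rightarrow$ (iii), (i) $\Rightarrow$ (iv), (iv) $\Rightarrow$ (iii), and (iii) $\Rightarrow$ (i).

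For (i) $\Rightarrow$ (ii), I would split the join in (\ref{eq:Good}) at a fixed coordinate $k$ into its sub-joins over $\{I:k\in I\}$ and $\{I:k\notin I\}$, identify these as $x_{k}\wedge p(\mathbf{x}_{k}^{1})$ and $p(\mathbf{x}_{k}^{0})$ respectively, and invoke the chain identity $a\vee(x\wedge b)=\med(a,x,b)$ valid whenever $a\leq b$ (which holds here by order-preservation). Conversely, (ii) $\Rightarrow$ (i) follows by iterating median decomposition over all $n$ coordinates: after $n$ steps the expression is built from $\wedge$, $\vee$, the variables, and the constants $\{p(\mathbf{a}):\mathbf{a}\in\{0,1\}^{n}\}$, hence is polynomial.

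For (i) $\Rightarrow$ (iii) and (i) $\Rightarrow$ (iv), order-preservation is immediate, and since $L$ is a chain, $\cl(\ran(p))=[p(\mathbf{0}),p(\mathbf{1})]$. Min and max homogeneity follow by pushing $\wedge c$ or $\vee c$ through (\ref{eq:Good}) termwise, using $p(\mathbf{0})\leq c\leq p(\mathbf{1})$ to absorb the $I=\emptyset$ and $I=[n]$ boundary terms. Range-idempotence is then obtained by evaluating at a constant tuple, and the horizontal identities follow from (\ref{eq:Good}) by partitioning each inner meet $\bigwedge_{i\in I}x_{i}$ by whether $x_{i}\geq c$ and regrouping.

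For (iv) $\Rightarrow$ (iii), applying horizontal minitive to $\mathbf{x}\wedge c$ with threshold $c\in\cl(\ran(p))$ gives $p(\mathbf{x}\wedge c)=p(c,\ldots,c)\wedge p([\mathbf{x}\wedge c]^{c})=c\wedge p([\mathbf{x}]^{c})$, using range-idempotence and the easy identity $[\mathbf{x}\wedge c]^{c}=[\mathbf{x}]^{c}$; a parallel application to $\mathbf{x}$ itself, combined with $c\leq p(\mathbf{x}\vee c)$ (which holds since $p(\mathbf{x}\vee c)\geq p(c,\ldots,c)=c$), yields $p(\mathbf{x})\wedge c=c\wedge p([\mathbf{x}]^{c})$ as well, so the two sides agree; max homogeneity is dual. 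Finally, for (iii) $\Rightarrow$ (i) I would verify that $p$ coincides with its Goodstein candidate $\hat p(\mathbf{x})=\bigvee_{I}p(\mathbf{e}_{I})\wedge\bigwedge_{i\in I}x_{i}$ on every $\mathbf{x}\in L^{n}$. The inequality $p(\mathbf{x})\geq\hat p(\mathbf{x})$ comes from order-preservation combined with min homogeneity, applied after truncating each $\bigwedge_{i\in I}x_{i}$ into $\cl(\ran(p))$ (harmless because $p(\mathbf{e}_{I})\in[p(\mathbf{0}),p(\mathbf{1})]$). The reverse inequality is the main obstacle: here I would exploit the decomposition $\mathbf{x}=\bigvee_{I}(\mathbf{e}_{I}\wedge\bigwedge_{i\in I}x_{i})$ together with max homogeneity, carefully using the chain structure of $L$ to recombine the level-sets so that the resulting join matches $\hat p(\mathbf{x})$ exactly, with no contribution lost to arguments falling outside $\cl(\ran(p))$.
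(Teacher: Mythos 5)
First, note that the paper does not prove Theorem~\ref{mainChar} at all: it is imported from \cite{CouMar0} (with the chain refinements in \cite{CouMar2}), so your attempt can only be measured against those sources. Most of your cycle is sound and is essentially the standard Goodstein-normal-form bookkeeping: the split of \eqref{eq:Good} at the $k$th coordinate for $(i)\Rightarrow(ii)$, iteration of the median decomposition for $(ii)\Rightarrow(i)$, the termwise computations for $(i)\Rightarrow(iii)$, and your $(iv)\Rightarrow(iii)$ argument (which is the same absorption trick the paper itself uses in the pseudo-setting, cf.\ Lemmas~\ref{lemma:3} and \ref{lemma:4}) are all correct. A minor looseness: partitioning the inner meets of the DNF by whether $x_i\geq c$ gives horizontal \emph{maxitivity} directly, but for horizontal \emph{minitivity} you need either the dual (conjunctive) normal form or an absorption argument on cross terms; as stated, that half of $(i)\Rightarrow(iv)$ is not yet an argument.

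The genuine gap is at the step you yourself flag as the crux: the inequality $p(\mathbf{x})\leq\hat p(\mathbf{x})$ in $(iii)\Rightarrow(i)$. Max homogeneity only allows you to peel off a join with a \emph{constant} tuple $(c,\ldots,c)$, $c\in\cl(\ran(p))$; it gives no licence to distribute $p$ over the join $\mathbf{x}=\bigvee_{I}(\mathbf{e}_{I}\wedge\bigwedge_{i\in I}x_{i})$, whose joinands are non-constant (comonotonic) tuples. Splitting $p$ over such joins is precisely comonotonic maxitivity, which by Theorem~\ref{theorem:WLP-comonot} is essentially equivalent to $p$ being a polynomial function — so the plan of ``recombining the level-sets'' presupposes at the decisive point the property that is to be proved, and the chain structure alone does not supply the mechanism. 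The missing idea is to first derive from $(iii)$ horizontal maxitivity at thresholds $c\in\cl(\ran(p))$ via the absorption computation
\[
p(\mathbf{x}\wedge c)\vee p([\mathbf{x}]_{c})
=\bigl(p(\mathbf{x})\vee p([\mathbf{x}]_{c})\bigr)\wedge\bigl(c\vee p([\mathbf{x}]_{c})\bigr)
= p(\mathbf{x})\wedge p(\mathbf{x}\vee c)=p(\mathbf{x}),
\]
using $[\mathbf{x}]_{c}\vee c=\mathbf{x}\vee c$ and max homogeneity (this is exactly the computation the paper runs in the pseudo-setting in Lemmas~\ref{lemma:4} and \ref{claim}), and then to iterate it with the truncated thresholds $\med(p(\mathbf{0}),x_{k},p(\mathbf{1}))\in\cl(\ran(p))$ to obtain median decomposability $(ii)$, or equivalently the DNF upper bound, one coordinate at a time. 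With that intermediate step inserted, your cycle closes; without it, the final implication is not established.
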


\begin{remark}
Note that, by the equivalence $(i)\Leftrightarrow(iii)$, for every polynomial
function $p\colon L^{n}\rightarrow L$, $p(\mathbf{x})=\langle p(\mathbf{x}%
)\rangle_{p}= p(\langle\mathbf{x}\rangle_{p})$. Moreover, for every function
$f\colon L^{m}\rightarrow L$ and every Sugeno integral $q\colon L^{n}%
\rightarrow L$, we have $\langle q(\mathbf{x})\rangle_{f}= q(\langle
\mathbf{x}\rangle_{f})$.
\end{remark}

Theorem~\ref{mainChar} is a refinement of the Main Theorem in \cite{CouMar0}
originally stated for functions over bounded distributive lattices. As shown
in \cite{CouMar2}, in the case when $L$ is a chain, Theorem~\ref{mainChar} can
be strengthened since the conditions need to be verified only on tuples of
certain prescribed types. Moreover, further characterizations are available
and given in terms of conditions of somewhat different flavor, as the
following theorem illustrates \cite{CouMar2}.

\begin{theorem}
\label{theorem:WLP-comonot} A function $p\colon L^{n}\rightarrow L$ is a
polynomial function if and only if it is range-idempotent, and comonotonic
minitive and maxitive, that is, for any two comonotonic tuples $\mathbf{x}$
and $\mathbf{x}^{\prime}$ we have%
\[
p(\mathbf{x}\wedge\mathbf{x}^{\prime})=p(\mathbf{x})\wedge p(\mathbf{x}%
^{\prime})\quad\mbox{and}\quad p(\mathbf{x}\vee\mathbf{x}^{\prime
})=p(\mathbf{x})\vee p(\mathbf{x}^{\prime}),\quad\mbox{respectively.}
\]

\end{theorem}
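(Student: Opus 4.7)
My plan is to reduce both directions to Theorem~\ref{mainChar}.

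For the forward direction, suppose $p$ is a polynomial function. Range-idempotence follows from condition $(iii)$ of Theorem~\ref{mainChar}: applying $\cl(\ran(p))$-min-homogeneity at $\mathbf{x}=\mathbf{1}$ yields $p(c,\ldots,c)=p(\mathbf{1})\wedge c$, which equals $c$ for $c\in\ran(p)$ since $c\leq p(\mathbf{1})$. For the comonotonic identities, I would use Goodstein's formula: on a fixed simplex $L^{n}_{\sigma}$ the meet $\bigwedge_{i\in I}x_{i}$ reduces to a single coordinate (the $\sigma$-first one in $I$), so $p$ restricts to an expression of the form $\bigvee_{k=1}^{n}(c_{k}\wedge x_{\sigma(k)})$ with $c_{1}\geq\cdots\geq c_{n}$, by monotonicity of the coefficients $p(\mathbf{e}_{I})$. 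Distributing $p(\mathbf{x})\wedge p(\mathbf{x}')$ and comparing term by term with $p(\mathbf{x}\wedge\mathbf{x}')$ (for $k\leq l$ the cross term $c_{k}\wedge c_{l}\wedge x_{\sigma(k)}\wedge x'_{\sigma(l)}$ is dominated by $c_{l}\wedge x_{\sigma(l)}\wedge x'_{\sigma(l)}$, and $l\leq k$ is symmetric) yields the minitive identity; the maxitive identity is dual.

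For the reverse direction, I aim to verify condition $(iv)$ of Theorem~\ref{mainChar}. Horizontal minitivity and maxitivity follow almost immediately from the hypotheses: for any $\mathbf{x}$ and $c$, both $\mathbf{x}\vee c$ and $[\mathbf{x}]^{c}$ are sorted by any permutation that sorts $\mathbf{x}$ (hence they are comonotonic), and their componentwise meet is $\mathbf{x}$, so the comonotonic-minitivity hypothesis gives $p(\mathbf{x})=p(\mathbf{x}\vee c)\wedge p([\mathbf{x}]^{c})$; the dual argument handles horizontal maxitivity. The genuine obstacle is establishing order-preservation, because comonotonicity is strictly stronger than componentwise comparability and the hypotheses never directly compare arbitrary $\mathbf{x}\leq\mathbf{y}$.

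To handle order-preservation, given $\mathbf{x}\leq\mathbf{y}$ I would build a finite chain $\mathbf{x}=\mathbf{z}^{(0)}\leq\mathbf{z}^{(1)}\leq\cdots\leq\mathbf{z}^{(k)}=\mathbf{y}$ in which each consecutive pair is comonotonic. At each stage, pick some coordinate $i$ with $z^{(j)}_{i}<y_{i}$ and raise it to $\min\bigl(y_{i},\min\{z^{(j)}_{r}:z^{(j)}_{r}>z^{(j)}_{i}\}\bigr)$, which is strictly greater than $z^{(j)}_{i}$. Placing $i$ last within its tie-group in any sorting of $\mathbf{z}^{(j)}$ also sorts $\mathbf{z}^{(j+1)}$, so the pair is comonotonic; termination holds because every $\mathbf{z}^{(j)}$ has entries in the finite set of values appearing in $\mathbf{x}$ and $\mathbf{y}$. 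Applying comonotonic minitivity at each step gives $p(\mathbf{z}^{(j)})=p(\mathbf{z}^{(j)}\wedge\mathbf{z}^{(j+1)})=p(\mathbf{z}^{(j)})\wedge p(\mathbf{z}^{(j+1)})$, whence $p(\mathbf{x})\leq p(\mathbf{y})$. With order-preservation secured, Theorem~\ref{mainChar}(iv) closes the argument.
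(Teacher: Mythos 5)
Your proof is correct. Note that the paper itself gives no proof of Theorem~\ref{theorem:WLP-comonot}: it is quoted from \cite{CouMar2}. So what you have produced is a self-contained derivation from Theorem~\ref{mainChar}, which is a perfectly legitimate route, and both substantive ingredients check out. For the forward direction, on a fixed simplex $L^{n}_{\sigma}$ Goodstein's form does collapse to $p(\mathbf{e}_{\emptyset})\vee\bigvee_{k}\bigl(c_{k}\wedge x_{\sigma(k)}\bigr)$ with $c_{k}=p(\mathbf{e}_{\{\sigma(k),\ldots,\sigma(n)\}})$ decreasing, and your cross-term domination (together with the diagonal terms, which supply the reverse inequality) gives the comonotonic identities; the only slip is that you silently dropped the constant term $p(\mathbf{e}_{\emptyset})=p(\mathbf{0})$, without which the simplex normal form is false (take $p$ constant), but this is harmless here since every cross term involving it is dominated by $p(\mathbf{0})$ itself. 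For the converse, the reduction of horizontal minitivity and maxitivity to the comonotonic hypotheses via the pairs $(\mathbf{x}\vee c,[\mathbf{x}]^{c})$ and $(\mathbf{x}\wedge c,[\mathbf{x}]_{c})$ is exactly right (both members are sorted by any permutation sorting $\mathbf{x}$, and their meet, resp.\ join, is $\mathbf{x}$), and you correctly identified order-preservation as the real issue: your interpolation, raising one coordinate at a time to the next occupied level so that consecutive tuples share a simplex, with termination because all entries stay among the finitely many values of $\mathbf{x}$ and $\mathbf{y}$, is sound, and it is essentially the same device the paper sketches in the final remark of Section~\ref{Sec:3} to show that pseudo-comonotonic functions are order-preserving. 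With order-preservation, range-idempotence and the two horizontal properties in hand, Theorem~\ref{mainChar}$(iv)$ indeed closes the argument.
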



\section{Pseudo-Sugeno integrals and Sugeno utility functions}

\label{Sec:3}

In this section we study certain prominent function classes in the realm of
multicriteria decision making. More precisely, we investigate overall utility
functions $U\colon\prod_{i\in\lbrack n]}L_{i}\rightarrow L$ which can be
obtained by aggregating various local utility functions (i.e.,
order-preserving mappings) $\varphi_{i}\colon L_{i}\rightarrow L$,
$i\in\lbrack n]$, using Sugeno integrals.

To this extent, in Subsection~\ref{pseudo} we introduce the wider class of
pseudo-polyno\-mial functions, and we present their axiomatization in
Subsection~\ref{carPseudo}. As we will see, pseudo-polynomial functions can be
equivalently defined in terms of Sugeno integrals, and thus they model certain
processes within multicriteria decision making. This is observed in Subsection
\ref{overall} where the notion of a Sugeno utility function $U\colon
\prod_{i\in\lbrack n]}L_{i}\rightarrow L$ associated with given local utility
functions $\varphi_{i}\colon L_{i}\rightarrow L$, $i\in\lbrack n]$, is
discussed. Using the axiomatization of pseudo-polynomial functions, in
Subsection~\ref{overall2} we establish several characterizations of Sugeno
utility functions given in terms of necessary and sufficient conditions which
naturally extend those presented in Subsection~\ref{subsec:Pol}.

\subsection{Pseudo-Sugeno integrals and pseudo-polynomial functions}

\label{pseudo} Let $L$ and $L_{1},\ldots,L_{n}$ be bounded chains. We shall
denote the top and bottom elements of $L_{1},\ldots,L_{n}$ and $L$ by $1$ and
$0$, respectively. This convention will not give rise to ambiguities. We shall
say that a mapping $\varphi_{i}\colon L_{i}\rightarrow L$, $i\in\lbrack n]$,
satisfies the \emph{boundary conditions} if for every $x\in L_{i}$,
\[
\varphi_{i}(0)\leq\varphi_{i}(x)\leq\varphi_{i}(1)\quad\mbox{or}\quad
\varphi_{i}(1)\leq\varphi_{i}(x)\leq\varphi_{i}(0).
\]
Observe that if $\varphi_{i}$ is order-preserving, then it satisfies the
boundary conditions. To simplify our exposition, we will assume that
$\varphi_{i}(0)\leq\varphi_{i}(x)\leq\varphi_{i}(1)$ holds; this can be always
achieved by replacing $L_{i}$ by its dual if necessary.

A function $f\colon\prod_{i\in[n]}L_{i}\to L$ is a \emph{pseudo-polynomial
function} if there is a polynomial function $p\colon L^{n} \to L$ and there
are unary functions $\varphi_{i}\colon L_{i}\to L$, $i\in[n]$, satisfying the
boundary conditions, such that
\begin{equation}
\label{eq:generalPol}f(\mathbf{x})=p(\varphi_{1}(x_{1}),\ldots,\varphi
_{n}(x_{n})).
\end{equation}
If $p$ is a Sugeno integral, then we say that $f$ is a \emph{pseudo-Sugeno
integral}. As the following result asserts, the notions of pseudo-polynomial
function and pseudo-Sugeno integral turn out to be equivalent.

\begin{proposition}
\label{prop:PseudoPol-Sugeno} A function $f\colon\prod_{i\in[n]}L_{i}\to L$ is
a pseudo-polynomial function if and only if it is a pseudo-Sugeno integral.
\end{proposition}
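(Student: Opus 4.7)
The backward direction is immediate, since every Sugeno integral is in particular a polynomial function. So the plan is to prove the forward direction: given $f(\mathbf{x})=p(\varphi_{1}(x_{1}),\ldots,\varphi_{n}(x_{n}))$ with $p$ a polynomial function and the $\varphi_i$ satisfying the boundary conditions, produce a Sugeno integral $q$ and unary maps $\psi_i$ with the boundary conditions such that $f(\mathbf{x})=q(\psi_{1}(x_{1}),\ldots,\psi_{n}(x_{n}))$.

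The key tool is the representation displayed right after Remark on unary polynomials: every polynomial function $p$ satisfies $p=\langle q\rangle_{p}$ for the explicit Sugeno integral
\[
q(y_{1},\ldots,y_{n}) = \bigvee_{\emptyset\subsetneq I\subsetneq[n]}\Bigl(p(\mathbf{e}_{I})\wedge\bigwedge_{i\in I}y_{i}\Bigr)\vee\bigwedge_{i\in[n]}y_{i}.
\]
Using this, together with the identity $\langle q(\mathbf{y})\rangle_{p}=q(\langle\mathbf{y}\rangle_{p})$ from the remark following Theorem~\ref{mainChar} (which is the componentwise commutation of the median truncation with a Sugeno integral), I would compute, for $\mathbf{y}=(\varphi_{1}(x_{1}),\ldots,\varphi_{n}(x_{n}))$,
\[
f(\mathbf{x}) = p(\mathbf{y}) = \langle q(\mathbf{y})\rangle_{p} = q(\langle\mathbf{y}\rangle_{p}) = q\bigl(\langle\varphi_{1}(x_{1})\rangle_{p},\ldots,\langle\varphi_{n}(x_{n})\rangle_{p}\bigr).
\]
So the natural candidate is $\psi_{i}(x_i) := \langle\varphi_{i}(x_i)\rangle_{p} = \mathop{\rm med}(p(\mathbf{0}),\varphi_{i}(x_i),p(\mathbf{1}))$.

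The remaining step is to verify that each $\psi_{i}$ still satisfies the boundary conditions. Since, by assumption, $\varphi_{i}(0)\leq\varphi_{i}(x)\leq\varphi_{i}(1)$ for all $x\in L_{i}$, and since $\mathop{\rm med}(a,\cdot,b)$ is order-preserving in its middle argument, we get $\psi_{i}(0)\leq\psi_{i}(x)\leq\psi_{i}(1)$, as required. There is no real obstacle here; the only mild care point is checking that the quoted remark applies in the form I need (i.e.\ that $\langle\cdot\rangle_{p}$ applied to a tuple means componentwise truncation, and that it commutes with $q$ because $q$ is a Sugeno integral and hence range-idempotent on all of $L$, so its range contains $[p(\mathbf{0}),p(\mathbf{1})]$). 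Once this is noted, the proof is a single three-line calculation plus the monotonicity check.
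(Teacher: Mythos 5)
Your proposal is correct and is essentially the paper's own argument: the paper likewise writes $p=\langle q\rangle_{p}$ for the explicit Sugeno integral, sets $\phi_{i}=\langle\varphi_{i}\rangle_{p}$, and uses the commutation $\langle q(\mathbf{y})\rangle_{p}=q(\langle\mathbf{y}\rangle_{p})$ to conclude. Your added check that the truncated maps $\psi_{i}$ still satisfy the boundary conditions is a small point the paper leaves implicit, but it does not change the route.
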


\begin{proof}
Clearly, every pseudo-Sugeno integral is a pseudo-polynomial function.
Conversely, if $f\colon\prod_{i\in\lbrack n]}L_{i}\rightarrow L$ is a function
of the form $f\left(  \mathbf{x}\right)  =p(\varphi_{1}(x_{1}),\ldots
,\varphi_{n}(x_{n}))$ for a lattice polynomial $p$, then by setting $\phi
_{i}=\langle\varphi_{i}\rangle_{p}$ and taking $q$ as a Sugeno integral such
that $p=\langle q\rangle_{p}$, we have
\begin{align*}
f(\mathbf{x}) &  =\langle q(\varphi_{1}(x_{1}),\ldots,\varphi_{n}%
(x_{n}))\rangle_{p}~=~q(\langle\varphi_{1}(x_{1})\rangle_{p},\ldots
,\langle\varphi_{n}(x_{n})\rangle_{p})\\
&  =q(\phi_{1}(x_{1}),\ldots,\phi_{n}(x_{n})),
\end{align*}
and thus $f$ is a pseudo-Sugeno integral.
\end{proof}

\begin{remark}
\label{rem:boundary} Clearly, if $f\left(  \mathbf{x}\right)  =p(\varphi
_{1}\left(  x_{1}\right)  ,\ldots,\varphi_{n}\left(  x_{n}\right)  )$ is a
pseudo-polynomial function, then for all $k\in\left[  n\right]  $ and
$\mathbf{x}\in\prod_{i\in\lbrack n]}L_{i}$ we have
\begin{equation}
f(\mathbf{x}_{k}^{0})\leq f(\mathbf{x})\leq f(\mathbf{x}_{k}^{1}).
\label{eq BCn}%
\end{equation}

\end{remark}

\subsection{A characterization of pseudo-Sugeno integrals}

\label{carPseudo}

Throughout this subsection, we assume that the unary maps $\varphi_{i}\colon
L_{i}\rightarrow L$ considered, satisfy the boundary condition $\varphi
_{i}(0)\leq\varphi_{i}(x)\leq\varphi_{i}(1)$.

We say that $f\colon\prod_{i\in\lbrack n]}L_{i}\rightarrow L$ is
\emph{pseudo-median decomposable} if for each $k\in\lbrack n]$ there is a
unary function $\varphi_{k}\colon L_{k}\rightarrow L$ such that
\begin{equation}
f(\mathbf{x})=\mathop{\rm med}\nolimits\big(f(\mathbf{x}_{k}^{0}),\varphi
_{k}(x_{k}),f(\mathbf{x}_{k}^{1})\big) \label{medianDecomp}%
\end{equation}
for every $\mathbf{x}\in\prod_{i\in\lbrack n]}L_{i}$. Note that if $f$ is
pseudo-median decomposable w.r.t. unary functions $\varphi_{i}\colon
L_{i}\rightarrow L$, $i\in\lbrack n]$, then (\ref{eq BCn}) holds.

\begin{theorem}
\label{thm:Med} Let $f\colon\prod_{i\in[n]}L_{i}\to L$ be a function. Then $f$
is a pseudo-Sugeno integral if and only if $f$ is pseudo-median decomposable.
\end{theorem}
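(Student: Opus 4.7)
The plan is to handle the two implications separately.

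For $(\Rightarrow)$, suppose $f(\mathbf{x}) = q(\phi_1(x_1),\ldots,\phi_n(x_n))$ with $q$ a Sugeno integral and each $\phi_i$ satisfying the boundary condition. I would invoke the median decomposability of $q$ granted by Theorem~\ref{mainChar}. Fixing $\mathbf{x}$ and $k \in [n]$, set $\mathbf{y} = (\phi_1(x_1),\ldots,\phi_n(x_n))$, $A := q(\mathbf{y}_k^0)$, and $B := q(\mathbf{y}_k^1)$; then median decomposability of $q$ at slot $k$ applied to $\phi_k(0), \phi_k(x_k), \phi_k(1)$ successively gives
\[
f(\mathbf{x}) = \med(A,\phi_k(x_k),B), \quad f(\mathbf{x}_k^0) = \med(A,\phi_k(0),B), \quad f(\mathbf{x}_k^1) = \med(A,\phi_k(1),B).
\]
Combined with the boundary condition $\phi_k(0) \leq \phi_k(x_k) \leq \phi_k(1)$, a short case analysis on whether $\phi_k(x_k)$ lies below $A$, inside $[A,B]$, or above $B$ would establish $f(\mathbf{x}) = \med(f(\mathbf{x}_k^0),\phi_k(x_k),f(\mathbf{x}_k^1))$, witnessing pseudo-median decomposability with the same inner maps $\phi_k$.

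For $(\Leftarrow)$, starting from $f$ pseudo-median decomposable with inner maps $\varphi_k$, I would iterate the pseudo-median identity once in each coordinate to rewrite $f(\mathbf{x})$ as a nested median in $\varphi_1(x_1),\ldots,\varphi_n(x_n)$ whose terminal constants are the values $f(\mathbf{e}_I)$, $I \subseteq [n]$. Because $\med(a,b,c)=(a\wedge b)\vee(b\wedge c)\vee(a\wedge c)$, this nested median corresponds to a lattice polynomial $\Phi\colon L^n \to L$ for which $f(\mathbf{x}) = \Phi(\varphi_1(x_1),\ldots,\varphi_n(x_n))$. Substituting $0$ (resp.\ $1$) for every $\varphi_i(x_i)$ and repeatedly collapsing via $\med(a,0,b)=a$ and $\med(a,1,b)=b$ for $a \leq b$ (where $a \leq b$ is guaranteed by the monotonicity of $I \mapsto f(\mathbf{e}_I)$ implied by~(\ref{eq BCn})) yields $\Phi(\mathbf{0}) = f(\mathbf{0})$ and $\Phi(\mathbf{1}) = f(\mathbf{1})$.

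To obtain inner maps satisfying the boundary condition, I would set $\phi_k := \langle \varphi_k \rangle_\Phi = \med(f(\mathbf{0}),\varphi_k,f(\mathbf{1}))$. The remark following Theorem~\ref{mainChar} yields $\Phi(\langle\mathbf{y}\rangle_\Phi)=\Phi(\mathbf{y})$, so the factorization $f(\mathbf{x}) = \Phi(\phi_1(x_1),\ldots,\phi_n(x_n))$ persists after this replacement. To verify the boundary condition $\phi_k(0)\leq\phi_k(x)\leq\phi_k(1)$, I would exploit pseudo-median decomposability at extremal arguments: at $\mathbf{x}=\mathbf{0}$ with $x_k=0$, the identity $f(\mathbf{0}) = \med(f(\mathbf{0}),\varphi_k(0),f(\mathbf{0}_k^1))$ together with $f(\mathbf{0})\leq f(\mathbf{0}_k^1)$ from~(\ref{eq BCn}) forces $\varphi_k(0)\leq f(\mathbf{0})$; symmetrically, $\varphi_k(1)\geq f(\mathbf{1})$. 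Hence $\phi_k(0)=f(\mathbf{0})\leq\phi_k(x)\leq f(\mathbf{1})=\phi_k(1)$. Consequently, $f$ is a pseudo-polynomial function, and by Proposition~\ref{prop:PseudoPol-Sugeno} it is a pseudo-Sugeno integral.

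The main obstacle is the backward direction's boundary-condition step: pseudo-median decomposability only supplies unary maps $\varphi_k$ that need not satisfy the boundary condition, and rectifying this hinges on the nonobvious extremal inequalities $\varphi_k(0)\leq f(\mathbf{0})$ and $\varphi_k(1)\geq f(\mathbf{1})$ extracted by testing pseudo-median decomposability at $\mathbf{0}$ and $\mathbf{1}$. The remaining ingredients—the case analysis in the forward direction and the nested-median-to-polynomial conversion—use only standard lattice identities.
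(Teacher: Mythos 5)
Both directions of your argument follow the paper's route. For the forward implication you reduce, after fixing the other coordinates, to a one-variable statement about the truncation $y\mapsto\med(A,y,B)$ with $A\leq B$; the paper does the same thing, phrased via the normal form (\ref{eq unary pol}) of the unary polynomial $u(y)=q\left(y,\varphi_{2}(x_{2}),\ldots,\varphi_{n}(x_{n})\right)$ rather than via median decomposability of $q$ in the $k$-th slot, but the content (and the window case analysis, using $\varphi_k(0)\leq\varphi_k(x_k)\leq\varphi_k(1)$) is identical. For the backward implication you iterate (\ref{medianDecomp}) to write $f=p\circ\overline{\varphi}$ for a lattice polynomial $p$ and then invoke Proposition~\ref{prop:PseudoPol-Sugeno}; this is exactly the paper's sufficiency argument.

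Where you diverge is the extra ``repair'' step aimed at securing the boundary conditions, and there the key claim is not a valid deduction. First, the step is unnecessary: at the start of Subsection~\ref{carPseudo} the paper fixes the standing convention that all unary maps considered satisfy $\varphi_{i}(0)\leq\varphi_{i}(x)\leq\varphi_{i}(1)$, so the maps witnessing pseudo-median decomposability already satisfy the boundary condition and $f=p\circ\overline{\varphi}$ is literally a pseudo-polynomial function. Second, as written the step has a gap: testing (\ref{medianDecomp}) at $\mathbf{x}=\mathbf{0}$ gives $f(\mathbf{0})=\med\bigl(f(\mathbf{0}),\varphi_{k}(0),f(\mathbf{0}_{k}^{1})\bigr)$, which forces $\varphi_{k}(0)\leq f(\mathbf{0})$ only when $f(\mathbf{0})<f(\mathbf{0}_{k}^{1})$; if $f(\mathbf{0})=f(\mathbf{0}_{k}^{1})$ the identity holds for every value of $\varphi_{k}(0)$ and nothing is forced (e.g.\ a constant $f$ is pseudo-median decomposable w.r.t.\ arbitrary $\varphi_{k}$). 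Moreover, your appeal to (\ref{eq BCn}) is circular in the generality you are working in: the paper's remark that pseudo-median decomposability implies (\ref{eq BCn}) is made under that same standing boundary-condition assumption, and it can fail for unconstrained inner maps (take $n=1$, $f(0)=1$, $f(1)=0$, $\varphi_{1}$ decreasing). So the extremal-inequality argument should be deleted (citing the standing convention instead) rather than kept; with that deletion your proof coincides with the paper's.
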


\begin{proof}
First we show that the condition is necessary. Suppose that the function
$f\colon\prod_{i\in\lbrack n]}L_{i}\rightarrow L$ is of the form
$f(\mathbf{x})=q(\varphi_{1}(x_{1}),\ldots,\varphi_{n}(x_{n}))$ for some
Sugeno integral $q$ and unary functions $\varphi_{k}$ satisfying the boundary
conditions. We prove (\ref{medianDecomp}) for $k=1$; the other cases can be
dealt with similarly. Let us fix the values of $x_{2},\ldots,x_{n}$, and let
us consider the unary polynomial function $u\left(  y\right)  =q\left(
y,\varphi_{2}\left(  x_{2}\right)  ,\ldots,\varphi_{n}\left(  x_{n}\right)
\right)  $.

Setting $a=\varphi_{1}\left(  0\right)  ,b=\varphi_{1}\left(  1\right)
,y_{1}=\varphi_{1}\left(  x_{1}\right)  $, the equality to prove takes the
form $u\left(  y_{1}\right)  =\mathop{\rm med}\nolimits\left(  u\left(
a\right)  ,y_{1},u\left(  b\right)  \right)  $. This becomes clear if we take
into account that $u$ is of the form (\ref{eq unary pol}), and by the boundary
condition $a\leq y_{1}\leq b$ (see also Figure~\ref{fig1}).

To verify that the condition is sufficient, just observe that applying
(\ref{medianDecomp}) repeatedly to each variable of $f$ we can
straightforwardly obtain a representation of $f$ as $f(\mathbf{x}%
)=p(\varphi_{1}(x_{1}),\ldots,\varphi_{n}(x_{n}))$ for some polynomial
function $p$. Thus, $f$ is a pseudo-polynomial function and, by
Proposition~\ref{prop:PseudoPol-Sugeno}, it is a pseudo-Sugeno integral.
\end{proof}

In the next theorem we give a disjunctive normal form of the polynomial $p$
obtained at the end of the proof of the above theorem (by repeated
applications of the pseudo-median decomposition formula). Here $\mathbf{e}%
_{I}$ denotes the characteristic vector of $I\subseteq\lbrack n]$ in
$\prod_{i\in\lbrack n]}L_{i}$, i.e., the $n$-tuple in $\prod_{i\in\lbrack
n]}L_{i}$ whose $i$-th component is $1_{L_{i}}$ if $i\in I$, and $0_{L_{i}}$ otherwise.

\begin{theorem}
\label{thm dnf for pseudo-Sugeno}If $f\colon\prod_{i\in\lbrack n]}%
L_{i}\rightarrow L$ is pseudo-median decomposable w.r.t. unary functions
$\varphi_{i}\colon L_{i}\rightarrow L$, $i\in\lbrack n]$, then $f\left(
\mathbf{x}\right)  =p(\varphi_{1}\left(  x_{1}\right)  ,\ldots,\varphi
_{n}\left(  x_{n}\right)  )$, where $p$ is given by%
\[
p\left(  x_{1},\ldots,x_{n}\right)  =\bigvee\limits_{I\subseteq\left[
n\right]  }\bigl(f\left(  \mathbf{e}_{I}\right)  \wedge\bigwedge\limits_{i\in
I}x_{i}\bigr).
\]

\end{theorem}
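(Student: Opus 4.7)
The plan is to apply the pseudo-median decomposition iteratively, once for each variable, and collect the resulting terms into a disjunctive normal form. The main simplification comes from working in a chain: since $L$ is totally ordered, whenever $a\leq b$ we have $\mathop{\rm med}\nolimits(a,y,b)=a\vee(y\wedge b)$. By Remark~\ref{rem:boundary} we have $f(\mathbf{x}_{k}^{0})\leq f(\mathbf{x}_{k}^{1})$ for every $k$ and every $\mathbf{x}$, so the decomposition (\ref{medianDecomp}) takes the streamlined form
\[
f(\mathbf{x})=f(\mathbf{x}_{k}^{0})\vee\bigl(\varphi_{k}(x_{k})\wedge f(\mathbf{x}_{k}^{1})\bigr).
\]

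Next I would prove by induction on $n$ that
\[
f(\mathbf{x})=\bigvee_{I\subseteq\lbrack n]}\Bigl(f(\mathbf{e}_{I})\wedge\bigwedge_{i\in I}\varphi_{i}(x_{i})\Bigr).
\]
For the base case $n=1$ this is immediate from the displayed identity above. For the induction step, I split $f$ into its two sections $g(x_{2},\dots,x_{n})=f(\mathbf{x}_{1}^{0})$ and $h(x_{2},\dots,x_{n})=f(\mathbf{x}_{1}^{1})$, both of which inherit pseudo-median decomposability with respect to $\varphi_{2},\dots,\varphi_{n}$. Applying the inductive hypothesis to $g$ and $h$ and then combining via $f(\mathbf{x})=g(x_{2},\dots,x_{n})\vee\bigl(\varphi_{1}(x_{1})\wedge h(x_{2},\dots,x_{n})\bigr)$, the distributive law groups the resulting terms precisely into those with $1\notin I$ (coming from $g$) and those with $1\in I$ (coming from $\varphi_{1}(x_{1})\wedge h$), giving the claimed formula.

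Finally, define
\[
p(x_{1},\ldots,x_{n})=\bigvee_{I\subseteq\lbrack n]}\Bigl(f(\mathbf{e}_{I})\wedge\bigwedge_{i\in I}x_{i}\Bigr).
\]
This is visibly a polynomial expression, hence a lattice polynomial function on $L$, and substituting $x_{i}\mapsto\varphi_{i}(x_{i})$ yields exactly the normal form derived above, so $f(\mathbf{x})=p(\varphi_{1}(x_{1}),\ldots,\varphi_{n}(x_{n}))$, as required.

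The only potential pitfall is the bookkeeping in the inductive step: one must be careful that the vectors $\mathbf{e}_{J}$ appearing in the expansions of $g$ and $h$ (living in $\prod_{i\geq2}L_{i}$) correspond, after prepending $0_{L_{1}}$ or $1_{L_{1}}$, to $\mathbf{e}_{J}$ and $\mathbf{e}_{J\cup\{1\}}$ in $\prod_{i\in\lbrack n]}L_{i}$, respectively. Once this identification is checked, the two disjunctions combine cleanly into a single join indexed by all $I\subseteq\lbrack n]$, and the result follows. No appeal to Proposition~\ref{prop:PseudoPol-Sugeno} or to Goodstein's theorem is needed, although the latter could alternatively be invoked on the polynomial $p$ produced at the end of the proof of Theorem~\ref{thm:Med} to read off its coefficients from $f(\mathbf{e}_{I})$.
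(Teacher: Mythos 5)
Your proof is correct and follows essentially the same route as the paper's: induction on $n$, rewriting the pseudo-median decomposition as $f(\mathbf{x})=f(\mathbf{x}_{k}^{0})\vee\bigl(\varphi_{k}(x_{k})\wedge f(\mathbf{x}_{k}^{1})\bigr)$, applying the induction hypothesis to the two sections (you peel off the first variable, the paper the last), and recombining by distributivity with the $\mathbf{e}_{I}$ bookkeeping you describe. One small citation point: the inequality $f(\mathbf{x}_{k}^{0})\leq f(\mathbf{x}_{k}^{1})$ should be drawn from the remark made right after the definition of pseudo-median decomposability (that \textup{(\ref{eq BCn})} holds for such $f$, using the boundary conditions) rather than from Remark~\ref{rem:boundary}, which is stated for functions already known to be pseudo-polynomial.
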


\begin{proof}
We need to prove that the following identity holds:
\begin{equation}
f\left(  x_{1},\ldots,x_{n}\right)  =\bigvee\limits_{I\subseteq\left[
n\right]  }\bigl(f\left(  \mathbf{e}_{I}\right)  \wedge\bigwedge\limits_{i\in
I}\varphi_{i}\left(  x_{i}\right)  \bigr). \label{eq dnf for overall utility}%
\end{equation}
We proceed by induction on $n$. If $n=1$, then the right hand side of
(\ref{eq dnf for overall utility}) takes the form $f\left(  0\right)
\vee\left(  f\left(  1\right)  \wedge\varphi_{1}\left(  x_{1}\right)  \right)
=\operatorname{med}\left(  f\left(  0\right)  ,\varphi_{1}\left(
x_{1}\right)  ,f\left(  1\right)  \right)  $, which equals $f\left(
x_{1}\right)  $ by (\ref{medianDecomp}). Now suppose that the statement of the
theorem is true for all pseudo-median decomposable functions in $n-1$
variables. Applying the pseudo-median decomposition to $f$ with $k=n$ we
obtain
\begin{align}
f\left(  x_{1},\ldots,x_{n}\right)   &  =\operatorname{med}\left(
f_{0}\left(  x_{1},\ldots,x_{n-1}\right)  ,\varphi_{n}\left(  x_{n}\right)
,f_{1}\left(  x_{1},\ldots,x_{n-1}\right)  \right)
\label{eq med in nth variable}\\
&  =f_{0}\left(  x_{1},\ldots,x_{n-1}\right)  \vee\left(  f_{1}\left(
x_{1},\ldots,x_{n-1}\right)  \wedge\varphi_{n}\left(  x_{n}\right)  \right)
,\nonumber
\end{align}
where $f_{0}$ and $f_{1}$ are the $(n-1)$-ary functions defined by%
\begin{align*}
f_{0}\left(  x_{1},\ldots,x_{n-1}\right)   &  =f\left(  x_{1},\ldots
,x_{n-1},0\right)  ,\\
f_{1}\left(  x_{1},\ldots,x_{n-1}\right)   &  =f\left(  x_{1},\ldots
,x_{n-1},1\right)  .
\end{align*}
It is easy to verify that $f_{0}$ and $f_{1}$ are pseudo-median decomposable
w.r.t. $\varphi_{1},\ldots,\varphi_{n-1}$, therefore we can apply the
induction hypothesis to these functions:%
\begin{align*}
f_{0}\left(  x_{1},\ldots,x_{n-1}\right)  =  &  \bigvee\limits_{I\subseteq
\left[  n-1\right]  }\bigl(f_{0}\left(  \mathbf{e}_{I}\right)  \wedge
\bigwedge\limits_{i\in I}\varphi_{i}\left(  x_{i}\right)  \bigr)=\\
&  \bigvee\limits_{I\subseteq\left[  n-1\right]  }\bigl(f\left(
\mathbf{e}_{I}\right)  \wedge\bigwedge\limits_{i\in I}\varphi_{i}\left(
x_{i}\right)  \bigr),\\
f_{1}\left(  x_{1},\ldots,x_{n-1}\right)  =  &  \bigvee\limits_{I\subseteq
\left[  n-1\right]  }\bigl(f_{1}\left(  \mathbf{e}_{I}\right)  \wedge
\bigwedge\limits_{i\in I}\varphi_{i}\left(  x_{i}\right)  \bigr)=\\
&  \bigvee\limits_{I\subseteq\left[  n-1\right]  }\bigl(f\left(
\mathbf{e}_{I\cup\left\{  n\right\}  }\right)  \wedge\bigwedge\limits_{i\in
I}\varphi_{i}\left(  x_{i}\right)  \bigr).
\end{align*}
Substituting back into (\ref{eq med in nth variable}) and using distributivity
we obtain the desired equality (\ref{eq dnf for overall utility}).
\end{proof}

Let us note that the polynomial $p$ given in the above theorem is a Sugeno
integral if and only if $f\left(  \mathbf{0}\right)  =0$ and $f\left(
\mathbf{1}\right)  =1$.

\subsection{Motivation: overall utility functions}

\label{overall}

Despite the theoretical interest, the motivation for the study of
pseudo-Sugeno integrals (or, equivalently, pseudo-polynomial functions) is
deeply rooted in multicriteria decision making. Let $\varphi_{i}\colon
L_{i}\rightarrow L$, $i\in\lbrack n]$, be local utility functions (i.e.,
order-preserving mappings) having a common range ${{\mathcal{R}}}\subseteq L$,
and let $M\colon L^{n}\rightarrow L$ be an aggregation function. The
\emph{overall utility function} associated with $\varphi_{i}$, $i\in\lbrack
n]$, and $M$ is the mapping $U\colon\prod_{i\in\lbrack n]}L_{i}\rightarrow L$
defined by
\begin{equation}
U(\mathbf{x})=M(\varphi_{1}(x_{1}),\ldots,\varphi_{n}(x_{n})).
\label{eq:Utility}%
\end{equation}
For background on overall utility functions, see e.g.
\cite{BouDubPraPir09,Gra}.

Thus, pseudo-Sugeno integrals subsume those overall utility functions
(\ref{eq:Utility}) where the aggregation function $M$ is a Sugeno integral. In
the sequel we shall refer to a mapping $f\colon\prod_{i\in\lbrack n]}%
L_{i}\rightarrow L$ for which there are local utility functions $\varphi_{i}$,
$i\in\lbrack n]$, and a Sugeno integral (or, equivalently, a polynomial
function) $q$, such that
\begin{equation}
f(\mathbf{x})=q(\varphi_{1}(x_{1}),\ldots,\varphi_{n}(x_{n})),
\label{eq:SugenoUtility}%
\end{equation}
as a \emph{Sugeno utility function}. As it will become clear in
Corollary~\ref{cor SUF=OP PSI}, these Sugeno utility functions coincide
exactly with those pseudo-Sugeno integrals (or equivalently, pseudo-polynomial
functions) which are order-preserving. Also, by taking $L_{1}=\cdots=L_{n}=L$
and $\varphi_{1}=\cdots=\varphi_{n}=\varphi$, it follows that Sugeno utility
functions subsume the notions of quasi-Sugeno integral and quasi-polynomial
function in the terminology of \cite{CouMar3}.

\begin{remark}
\label{remark3} Note that the condition that $\varphi_{i}\colon L_{i}%
\rightarrow L$, $i\in\lbrack n]$ have a common range ${\mathcal{R}}$ is not
really restrictive, since each $\varphi_{i}$ can be extended to a local
utility function $\varphi_{i}^{\prime}\colon L_{i}^{\prime}\rightarrow L$,
where $L_{i}\subseteq L_{i}^{\prime}$, in such a way that each $\varphi
_{i}^{\prime}$, $i\in\lbrack n]$, has the same range ${\mathcal{R}}\subseteq
L$. In fact, if ${\mathcal{R}}_{i}$ is the range of $\varphi_{i}$, for each
$i\in\lbrack n]$, then ${\mathcal{R}}$ can be chosen as the interval
\[
\mathop{\rm cl}\nolimits(\bigcup_{i\in\lbrack n]}{\mathcal{R}}_{i}%
)=[\bigwedge_{i\in\lbrack n]}\varphi_{i}(0),\bigvee_{i\in\lbrack n]}%
\varphi_{i}(1)].
\]
In this way, if $f^{\prime}\colon\prod_{i\in\lbrack n]}L_{i}^{\prime
}\rightarrow L$ is such that $f^{\prime}(\mathbf{x})=q(\varphi_{1}^{\prime
}(x_{1}),\ldots,\varphi_{n}^{\prime}(x_{n}))$, then the restriction of
$f^{\prime}$ to ${\prod_{i\in\lbrack n]}L_{i}}$ coincides with the function
$f(\mathbf{x})=q(\varphi_{1}(x_{1}),\ldots,\varphi_{n}(x_{n}))$.
\end{remark}


\subsection{Characterizations of Sugeno utility functions}

\label{overall2}

In view of the remark above, in this subsection we will assume that the local
utility functions $\varphi_{i}\colon L_{i}\to L$, $i\in[n]$, considered have
the same range ${\mathcal{R}}\subseteq L$. Since local utility functions
satisfy the boundary conditions, from Theorem~\ref{thm:Med} we get the
following characterization of Sugeno utility functions.

\begin{corollary}
\label{Cor:Med} A function $f\colon\prod_{i\in[n]}L_{i}\to L$ is a Sugeno
utility function if and only if it is pseudo-median decomposable w.r.t. local
utility functions.
\end{corollary}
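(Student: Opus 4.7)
The plan is to derive this corollary by bridging the gap between Theorem~\ref{thm:Med} (which axiomatizes pseudo-Sugeno integrals in terms of pseudo-median decomposability w.r.t.\ maps satisfying only the boundary conditions) and the stronger requirement built into the definition of a Sugeno utility function, namely that the inner maps be order-preserving. The key point is that order-preservingness is preserved by the natural transformations that move back and forth between pseudo-polynomial and pseudo-Sugeno representations.

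For necessity, I would start from a factorization $f(\mathbf{x})=q(\varphi_{1}(x_{1}),\ldots,\varphi_{n}(x_{n}))$ with $q$ a Sugeno integral and each $\varphi_{i}$ a local utility function, and note that any order-preserving map automatically satisfies the boundary conditions. Invoking the necessity direction of Theorem~\ref{thm:Med}, whose proof produces a pseudo-median decomposition witnessed by precisely the maps $\varphi_{1},\ldots,\varphi_{n}$, I conclude that $f$ is pseudo-median decomposable w.r.t.\ local utility functions.

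For sufficiency, I would apply Theorem~\ref{thm dnf for pseudo-Sugeno} to the given pseudo-median decomposable $f$, obtaining the explicit polynomial function $p$ with $f(\mathbf{x})=p(\varphi_{1}(x_{1}),\ldots,\varphi_{n}(x_{n}))$. I would then follow the recipe from the proof of Proposition~\ref{prop:PseudoPol-Sugeno} to rewrite this as $f(\mathbf{x})=q(\phi_{1}(x_{1}),\ldots,\phi_{n}(x_{n}))$, where $q$ is a Sugeno integral and $\phi_{i}=\langle\varphi_{i}\rangle_{p}=\mathop{\rm med}\nolimits(p(\mathbf{0}),\varphi_{i},p(\mathbf{1}))$.

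The only nontrivial point, and the main (if mild) obstacle, will be to verify that each $\phi_{i}$ is again a local utility function, so that this is a bona fide Sugeno utility factorization. I expect to dispose of this in one line: $\phi_{i}$ is the composition of the order-preserving map $\varphi_{i}$ with the truncated identity $y\mapsto\mathop{\rm med}\nolimits(p(\mathbf{0}),y,p(\mathbf{1}))$ described in (\ref{eq unary pol}), which is itself order-preserving; hence $\phi_{i}$ is order-preserving, and the representation $f(\mathbf{x})=q(\phi_{1}(x_{1}),\ldots,\phi_{n}(x_{n}))$ exhibits $f$ as a Sugeno utility function.
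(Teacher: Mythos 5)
Your proof is correct and takes essentially the same route as the paper, which derives the corollary directly from Theorem~\ref{thm:Med} (necessity via the pseudo-median decomposition w.r.t.\ the given $\varphi_i$, sufficiency via the polynomial representation and the conversion in Proposition~\ref{prop:PseudoPol-Sugeno}). Your explicit verification that $\phi_i=\langle\varphi_i\rangle_p$ remains order-preserving is exactly the small point the paper leaves implicit in its one-line derivation, and it is handled correctly.
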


We will provide further axiomatizations of Sugeno utility functions extending
those of polynomial functions given in Subsection~\ref{subsec:Pol} as well as
those of quasi-polynomial functions given in \cite{CouMar3}. For the sake of
simplicity, given $\varphi_{i}\colon L_{i}\rightarrow L$, $i\in\lbrack n]$, we
make use of the shorthand notation $\overline{\varphi}(\mathbf{x}%
)=(\varphi_{1}(x_{1}),\ldots,\varphi_{n}(x_{n}))$ and $\overline{\varphi
}^{\,-1}(c)=\{\mathbf{d}:\varphi_{i}(d_{i})=c$ for all $i\in\left[  n\right]
\}$, for every $c\in{\mathcal{R}}$.

We say that a function $f\colon\prod_{i\in\lbrack n]}L_{i}\rightarrow L$ is
\emph{pseudo-max homogeneous} (resp. \emph{pseudo-min homogeneous}) if there
are local utility functions $\varphi_{i}\colon L_{i}\rightarrow L$,
$i\in\lbrack n]$, such that for every $\mathbf{x}\in\prod_{i\in\lbrack
n]}L_{i}$ and $c\in{\mathcal{R}}$,
\begin{equation}
f(\mathbf{x}\vee\mathbf{d})=f(\mathbf{x})\vee c~(\mbox{resp. }\,f(\mathbf{x}%
\wedge\mathbf{d})=f(\mathbf{x})\wedge c),~\mbox{whenever }\mathbf{d}%
\in\overline{\varphi}^{\,-1}(c).
\end{equation}

\begin{fact}
\label{fact:1} Let $f\colon\prod_{i\in[n]}L_{i}\rightarrow L$ be a function,
and let $\varphi_{i}\colon L_{i}\to L$, $i\in[n]$, be local utility functions.
If $f$ is pseudo-min homogeneous and pseudo-max homogeneous w.r.t.
$\varphi_{1},\ldots,\varphi_{n}$, then it satisfies the condition
\begin{equation}
\label{PseudoIdemp}\mbox{for every $c\in {\mathcal{R}}$ and $\mathbf{d}\in  \overline{\varphi}^{\, -1}(c)$, $f(\mathbf{d}) =c$.}
\end{equation}

\end{fact}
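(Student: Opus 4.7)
The plan is to obtain both inequalities $c \le f(\mathbf{d})$ and $f(\mathbf{d}) \le c$ by the same trick: substitute $\mathbf{x} = \mathbf{d}$ into the two homogeneity identities. This exploits the fact that $\mathbf{d} \vee \mathbf{d} = \mathbf{d} \wedge \mathbf{d} = \mathbf{d}$, so no arithmetic on tuples is needed beyond idempotence of $\vee$ and $\wedge$.

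More precisely, fix $c \in \mathcal{R}$ and $\mathbf{d} \in \overline{\varphi}^{\,-1}(c)$. Applying pseudo-max homogeneity with the choice $\mathbf{x} := \mathbf{d}$ (which is legitimate since $\mathbf{d} \in \overline{\varphi}^{\,-1}(c)$) yields
\[
f(\mathbf{d}) \;=\; f(\mathbf{d} \vee \mathbf{d}) \;=\; f(\mathbf{d}) \vee c,
\]
so that $c \le f(\mathbf{d})$. Dually, applying pseudo-min homogeneity with the same substitution gives
\[
f(\mathbf{d}) \;=\; f(\mathbf{d} \wedge \mathbf{d}) \;=\; f(\mathbf{d}) \wedge c,
\]
so that $f(\mathbf{d}) \le c$. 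Combining the two inequalities produces $f(\mathbf{d}) = c$, which is exactly condition (\ref{PseudoIdemp}).

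There is no genuine obstacle here: the argument is purely formal, using only the idempotence of the lattice operations and the antisymmetry of the order in $L$. The only thing one has to notice is that $\mathbf{d}$ itself serves as an admissible choice for the universally quantified $\mathbf{x}$ in both homogeneity axioms, which is immediate from the fact that $\mathbf{d} \in \prod_{i \in [n]} L_i$.
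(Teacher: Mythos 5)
Your argument is correct: substituting $\mathbf{x}=\mathbf{d}$ into the pseudo-max and pseudo-min homogeneity identities gives $f(\mathbf{d})=f(\mathbf{d})\vee c$ and $f(\mathbf{d})=f(\mathbf{d})\wedge c$, hence $c\leq f(\mathbf{d})\leq c$. The paper states Fact~\ref{fact:1} without proof as a straightforward observation, and your substitution is exactly the intended argument, so there is nothing to add.
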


\begin{lemma}
\label{lemma:2} If $f(x_{1},\ldots,x_{n})=q(\varphi(x_{1}),\ldots,\varphi
_{n}(x_{n}))$ for some Sugeno integral $q\colon L^{n} \to L$ and local utility
functions $\varphi_{1},\ldots,\varphi_{n}$, then $f$ is pseudo-min homogeneous
and pseudo-max homogeneous w.r.t. $\varphi_{1},\ldots,\varphi_{n}$.
\end{lemma}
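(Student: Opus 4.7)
The plan is to unfold the definition of pseudo-max homogeneity, push the computation through the inner $\varphi_i$'s by using that the $L_i$ are chains and the $\varphi_i$ are order-preserving, and then finish by invoking the max-homogeneity of the Sugeno integral $q$ from Theorem~\ref{mainChar}.

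In detail, suppose $c \in \mathcal{R}$ and $\mathbf{d} \in \overline{\varphi}^{\,-1}(c)$, so $\varphi_i(d_i) = c$ for each $i$. First I would establish the key componentwise identity
\[
\varphi_i(x_i \vee d_i) = \varphi_i(x_i) \vee c \qquad (i \in [n]).
\]
Because $L_i$ is a chain, either $x_i \leq d_i$, in which case $x_i \vee d_i = d_i$ and $\varphi_i(x_i) \leq \varphi_i(d_i) = c$, so both sides equal $c$; or $d_i \leq x_i$, in which case $x_i \vee d_i = x_i$ and $\varphi_i(x_i) \geq \varphi_i(d_i) = c$, so both sides equal $\varphi_i(x_i)$.

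Next I would apply this componentwise identity and then invoke the max-homogeneity of $q$. Since $q$ is a Sugeno integral, $\mathrm{ran}(q) = L$, hence $\mathrm{cl}(\mathrm{ran}(q)) = L$, and by the equivalence $(i) \Leftrightarrow (iii)$ of Theorem~\ref{mainChar}, $q(\mathbf{y} \vee c) = q(\mathbf{y}) \vee c$ for every $\mathbf{y} \in L^n$ and every $c \in L$. Thus
\[
f(\mathbf{x} \vee \mathbf{d}) = q\bigl(\varphi_1(x_1) \vee c, \ldots, \varphi_n(x_n) \vee c\bigr) = q(\overline{\varphi}(\mathbf{x})) \vee c = f(\mathbf{x}) \vee c,
\]
which gives pseudo-max homogeneity. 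Pseudo-min homogeneity follows by the order-dual argument, replacing $\vee$ with $\wedge$ throughout and using the $\mathrm{cl}(\mathrm{ran}(q))$-min homogeneity of $q$ from the same equivalence.

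There is no real obstacle here: the only subtle point is the componentwise identity, which relies essentially on the chain assumption on each $L_i$ (it would fail for general lattices, since $x_i \vee d_i$ need not be comparable to either argument). Once that is in hand, the outer step is a direct appeal to Theorem~\ref{mainChar}.
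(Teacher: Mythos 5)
Your proof is correct and follows essentially the same route as the paper's: the paper likewise writes $f(\mathbf{x}\vee\mathbf{d})=q(\overline{\varphi}(\mathbf{x})\vee\overline{\varphi}(\mathbf{d}))=q(\overline{\varphi}(\mathbf{x})\vee c)=q(\overline{\varphi}(\mathbf{x}))\vee c$ using Theorem~\ref{mainChar} and order-preservation of the $\varphi_k$, and handles pseudo-min homogeneity dually. Your explicit chain case-analysis simply spells out the componentwise identity $\varphi_i(x_i\vee d_i)=\varphi_i(x_i)\vee\varphi_i(d_i)$ that the paper invokes implicitly.
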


\begin{proof}
Let ${\mathcal{R}}$ be the common range of $\varphi_{1},\ldots,\varphi_{n}$,
let $c\in{\mathcal{R}}$ and $\mathbf{d}\in\overline{\varphi}^{\, -1}(c)$. By
Theorem~\ref{mainChar} and the fact that each $\varphi_{k}$ is
order-preserving, we have
\begin{align*}
f(\mathbf{x}\vee\mathbf{d})  &  = q(\overline{\varphi}(\mathbf{x}%
\vee\mathbf{d})) =q(\overline{\varphi}(\mathbf{x})\vee\overline{\varphi
}(\mathbf{d}))\\
&  = q(\overline{\varphi}(\mathbf{x})\vee c) =q(\overline{\varphi}%
(\mathbf{x}))\vee c = f(\mathbf{x})\vee c,
\end{align*}
and hence, $f$ is pseudo-max homogeneous. The dual statement follows similarly.
\end{proof}

For $\mathbf{x}, \mathbf{d}\in\prod_{i\in[n]}L_{i}$, let $[\mathbf{x}%
]_{\mathbf{d}}$ be the $n$-tuple whose $i$th component is $0_{L_{i}}$, if
$x_{i}\leq d_{i}$, and $x_{i}$, otherwise, and dually let $[\mathbf{x}%
]^{\mathbf{d}}$ be the $n$-tuple whose $i$th component is $1_{L_{i}}$, if
$x_{i}\geq d_{i}$, and $x_{i}$, otherwise. We say that $f\colon\prod_{i\in
[n]}L_{i}\rightarrow L$ is \emph{pseudo-horizontally maxitive} (resp.
\emph{pseudo-horizontally minitive}) if there are local utility functions
$\varphi_{i}\colon L_{i}\to L$, $i\in[n]$, such that for every $\mathbf{x}%
\in\prod_{i\in[n]}L_{i}$ and $c\in{\mathcal{R}}$, if $\mathbf{d}\in
\overline{\varphi}^{\, -1}(c)$, then
\begin{equation}
\label{eq:Hor}f(\mathbf{x}) = f(\mathbf{x}\wedge\mathbf{d})\vee f([\mathbf{x}%
]_{\mathbf{d}}) \quad(\mbox{resp. } \, f(\mathbf{x}) = f(\mathbf{x}%
\vee\mathbf{d})\wedge f([\mathbf{x}]^{\mathbf{d}})).
\end{equation}

\begin{lemma}
\label{lemma:3} If $f\colon\prod_{i\in[n]}L_{i}\rightarrow L$ is
order-preserving, pseudo-horizontally minitive (resp.\ pseudo-horizontally
maxitive) and satisfies \textup{(\ref{PseudoIdemp})}, then it is pseudo-min
homogeneous (resp. pseudo-max homogeneous).
\end{lemma}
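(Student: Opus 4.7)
The plan is to prove the pseudo-horizontally minitive case; the pseudo-horizontally maxitive case follows by order-theoretic duality (swap $\wedge\leftrightarrow\vee$ and $[\cdot]^{\mathbf{d}}\leftrightarrow[\cdot]_{\mathbf{d}}$). The key observation is that both sides of the desired identity $f(\mathbf{x}\wedge\mathbf{d})=f(\mathbf{x})\wedge c$ can be related to the single quantity $f([\mathbf{x}]^{\mathbf{d}})$, with $\mathbf{d}\in\overline{\varphi}^{\,-1}(c)$ arbitrary but fixed.

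First I would record two elementary identities about the $\wedge,\vee$ operations and the $[\cdot]^{\mathbf{d}}$ operator: namely, $(\mathbf{x}\wedge\mathbf{d})\vee\mathbf{d}=\mathbf{d}$, and $[\mathbf{x}\wedge\mathbf{d}]^{\mathbf{d}}=[\mathbf{x}]^{\mathbf{d}}$. The latter follows coordinatewise: the $i$-th component of $[\mathbf{x}\wedge\mathbf{d}]^{\mathbf{d}}$ equals $1_{L_i}$ exactly when $x_i\wedge d_i\geq d_i$, i.e.\ when $x_i\geq d_i$, and otherwise equals $x_i\wedge d_i=x_i$, which matches $[\mathbf{x}]^{\mathbf{d}}$ component by component. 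Now I would apply pseudo-horizontal minitivity (\ref{eq:Hor}) to the tuple $\mathbf{x}\wedge\mathbf{d}$, using the same $\mathbf{d}\in\overline{\varphi}^{\,-1}(c)$. Combining the two identities with hypothesis \textup{(\ref{PseudoIdemp})}, which gives $f(\mathbf{d})=c$, this yields
\[
f(\mathbf{x}\wedge\mathbf{d}) \;=\; f(\mathbf{d})\wedge f([\mathbf{x}]^{\mathbf{d}}) \;=\; c\wedge f([\mathbf{x}]^{\mathbf{d}}).
\]

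From here both inequalities are immediate. Applying pseudo-horizontal minitivity to $\mathbf{x}$ itself gives $f(\mathbf{x})\leq f([\mathbf{x}]^{\mathbf{d}})$, so $f(\mathbf{x})\wedge c\leq c\wedge f([\mathbf{x}]^{\mathbf{d}})=f(\mathbf{x}\wedge\mathbf{d})$. The reverse inequality follows because $f(\mathbf{x}\wedge\mathbf{d})\leq f(\mathbf{x})$ by order-preservation of $f$, and $f(\mathbf{x}\wedge\mathbf{d})=c\wedge f([\mathbf{x}]^{\mathbf{d}})\leq c$ by the displayed formula. I do not anticipate any real obstacle; the argument is essentially a one-line manipulation once the identity $[\mathbf{x}\wedge\mathbf{d}]^{\mathbf{d}}=[\mathbf{x}]^{\mathbf{d}}$ is in hand. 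The only points worth flagging are to track precisely where order-preservation is used (only in the easy inequality $f(\mathbf{x}\wedge\mathbf{d})\leq f(\mathbf{x})$) and where \textup{(\ref{PseudoIdemp})} is used (to pin down $f(\mathbf{d})=c$), since without the latter the argument collapses.
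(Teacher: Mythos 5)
Your argument is correct and takes essentially the same route as the paper's proof: apply pseudo-horizontal minitivity to $\mathbf{x}\wedge\mathbf{d}$, use $(\mathbf{x}\wedge\mathbf{d})\vee\mathbf{d}=\mathbf{d}$, $[\mathbf{x}\wedge\mathbf{d}]^{\mathbf{d}}=[\mathbf{x}]^{\mathbf{d}}$ and $f(\mathbf{d})=c$ from \textup{(\ref{PseudoIdemp})}, then sandwich $f(\mathbf{x}\wedge\mathbf{d})$ against $f(\mathbf{x})\wedge c$. The only (immaterial) difference is that you obtain $f(\mathbf{x})\leq f([\mathbf{x}]^{\mathbf{d}})$ from minitivity applied to $\mathbf{x}$, whereas the paper gets it directly from order-preservation since $\mathbf{x}\leq[\mathbf{x}]^{\mathbf{d}}$.
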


\begin{proof}
If $f\colon\prod_{i\in\lbrack n]}L_{i}\rightarrow L$ is order-preserving,
pseudo-horizontally minitive and satisfies (\ref{PseudoIdemp}) w.r.t.
$\varphi_{1},\ldots,\varphi_{n}$, then for every $\mathbf{x}\in\prod
_{i\in\lbrack n]}L_{i}$, $c\in{\mathcal{R}}$, $\mathbf{d}\in\overline{\varphi
}^{\,-1}(c)$ we have%
\begin{align*}
f(\mathbf{x})\wedge c  &  =f(\mathbf{x})\wedge f(\mathbf{d})~\geq
~f(\mathbf{x}\wedge\mathbf{d})~=~f((\mathbf{x}\wedge\mathbf{d})\vee
\mathbf{d})\wedge f([\mathbf{x}\wedge\mathbf{d}]^{\mathbf{d}})\\
&  =f(\mathbf{d})\wedge f([\mathbf{x}]^{\mathbf{d}})~\geq~f(\mathbf{d})\wedge
f(\mathbf{x})~=~f(\mathbf{x})\wedge c.
\end{align*}
Hence $f$ is pseudo-min homogeneous w.r.t. $\varphi_{1},\ldots,\varphi_{n}$.
The dual statement can be proved similarly.
\end{proof}

\begin{lemma}
\label{lemma:4} Suppose that $f\colon\prod_{i\in[n]}L_{i}\rightarrow L$ is
order-preserving and pseudo-min homogeneous (resp. pseudo-max homogeneous),
and satisfies \textup{(\ref{PseudoIdemp})}. Then $f$ is pseudo-max homogeneous
(resp. pseudo-min homogeneous) if and only if it is pseudo-horizontally
maxitive (resp.\ pseudo-horizontally minitive).
\end{lemma}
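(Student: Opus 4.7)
The plan is to handle only the first formulation of the statement, since the second is its order dual and follows by interchanging $\wedge$ and $\vee$ throughout the argument. For the sufficiency direction ($\Leftarrow$), there is nothing new to prove: Lemma~\ref{lemma:3} in its maxitive form already shows that order-preservation, pseudo-horizontal maxitivity, and \textup{(\ref{PseudoIdemp})} together imply pseudo-max homogeneity, so that implication is absorbed directly.

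For the necessity direction ($\Rightarrow$), I would assume that $f$ is both pseudo-min and pseudo-max homogeneous w.r.t.\ $\varphi_{1},\ldots,\varphi_{n}$, fix $\mathbf{x}\in\prod_{i\in[n]}L_{i}$, $c\in\mathcal{R}$, $\mathbf{d}\in\overline{\varphi}^{\,-1}(c)$, and verify pseudo-horizontal maxitivity
\[
f(\mathbf{x})=f(\mathbf{x}\wedge\mathbf{d})\vee f([\mathbf{x}]_{\mathbf{d}}).
\]
The two coordinatewise identities to establish first are
\[
\mathbf{x}=(\mathbf{x}\wedge\mathbf{d})\vee[\mathbf{x}]_{\mathbf{d}}\quad\text{and}\quad[\mathbf{x}]_{\mathbf{d}}\vee\mathbf{d}=\mathbf{x}\vee\mathbf{d},
\]
both of which are checked pointwise by splitting on whether $x_{i}\leq d_{i}$ or $x_{i}>d_{i}$. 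Combining the first identity with order-preservation gives $f(\mathbf{x}\wedge\mathbf{d})\vee f([\mathbf{x}]_{\mathbf{d}})\leq f(\mathbf{x})$ immediately, and also $f([\mathbf{x}]_{\mathbf{d}})\leq f(\mathbf{x})$ from $[\mathbf{x}]_{\mathbf{d}}\leq\mathbf{x}$.

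The core computation is then the reverse inequality. Pseudo-min homogeneity yields $f(\mathbf{x}\wedge\mathbf{d})=f(\mathbf{x})\wedge c$, while pseudo-max homogeneity, applied to both $\mathbf{x}\vee\mathbf{d}$ and $[\mathbf{x}]_{\mathbf{d}}\vee\mathbf{d}$ using the second identity, yields $f([\mathbf{x}]_{\mathbf{d}})\vee c=f(\mathbf{x})\vee c$. A single distributive step in the chain $L$ then collapses everything:
\[
f(\mathbf{x}\wedge\mathbf{d})\vee f([\mathbf{x}]_{\mathbf{d}})=(f(\mathbf{x})\wedge c)\vee f([\mathbf{x}]_{\mathbf{d}})=\bigl(f(\mathbf{x})\vee f([\mathbf{x}]_{\mathbf{d}})\bigr)\wedge\bigl(c\vee f([\mathbf{x}]_{\mathbf{d}})\bigr)=f(\mathbf{x})\wedge(f(\mathbf{x})\vee c)=f(\mathbf{x}).
\]
I do not anticipate any genuine obstacle here: the only delicate point is spotting the correct decompositions of $\mathbf{x}$ and of $\mathbf{x}\vee\mathbf{d}$, after which the two homogeneity hypotheses and a single use of distributivity close the argument. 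The dual statement is then obtained by swapping $\wedge$ with $\vee$ and $[\mathbf{x}]_{\mathbf{d}}$ with $[\mathbf{x}]^{\mathbf{d}}$ everywhere.
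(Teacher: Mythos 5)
Your proposal is correct and follows essentially the same route as the paper: the converse is absorbed into the maxitive form of Lemma~\ref{lemma:3}, and the forward direction is the identical computation $f(\mathbf{x}\wedge\mathbf{d})\vee f([\mathbf{x}]_{\mathbf{d}})=(f(\mathbf{x})\wedge c)\vee f([\mathbf{x}]_{\mathbf{d}})$ expanded by distributivity and closed using order-preservation and pseudo-max homogeneity. The only cosmetic difference is at the last step, where you rewrite $c\vee f([\mathbf{x}]_{\mathbf{d}})$ as $f(\mathbf{x})\vee c$ and finish by absorption, whereas the paper keeps it as $f(\mathbf{d}\vee[\mathbf{x}]_{\mathbf{d}})$ and finishes by monotonicity.
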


\begin{proof}
Suppose that $f\colon\prod_{i\in[n]}L_{i}\rightarrow L$ is order-preserving
and pseudo-min homogeneous and satisfies (\ref{PseudoIdemp}) w.r.t.
$\varphi_{1},\ldots,\varphi_{n}$. Assume first that $f$ is pseudo-max
homogeneous w.r.t. $\varphi_{1},\ldots,\varphi_{n}$. For every $\mathbf{x}%
\in\prod_{i\in[n]}L_{i}$ and $\mathbf{d}\in\overline{\varphi}^{\, -1}(c)$,
where $c\in{\mathcal{R}}$, we have
\begin{align*}
f(\mathbf{x}\wedge\mathbf{d})\vee f([\mathbf{x}]_{\mathbf{d}})  &  =
\big(f(\mathbf{x})\wedge c\big)\vee f([\mathbf{x}]_{\mathbf{d}}) =
\big(f(\mathbf{x})\vee f([\mathbf{x}]_{\mathbf{d}})\big)\wedge\big(c\vee
f([\mathbf{x}]_{\mathbf{d}})\big)\\
&  = f(\mathbf{x})\wedge f(\mathbf{d}\vee[\mathbf{x}]_{\mathbf{d}}) =
f(\mathbf{x}),
\end{align*}
and hence $f$ is pseudo-horizontally maxitive w.r.t. $\varphi_{1}%
,\ldots,\varphi_{n}$.

Conversely, if $f$ is pseudo-horizontally maxitive w.r.t. $\varphi_{1}%
,\ldots,\varphi_{n}$, then by Lemma~\ref{lemma:3} $f$ is pseudo-max
homogeneous w.r.t. $\varphi_{1},\ldots,\varphi_{n}$. The dual statement can be
proved similarly.
\end{proof}

\begin{lemma}
\label{claim} If $f\colon\prod_{i\in[n]}L_{i}\rightarrow L$ is
order-preserving, pseudo-min homogeneous and pseudo-horizontally maxitive,
then it is pseudo-median decomposable w.r.t. local utility functions.
\end{lemma}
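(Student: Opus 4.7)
The plan is to verify the pseudo-median decomposition formula (\ref{medianDecomp}) directly for the local utility functions $\varphi_1,\ldots,\varphi_n$ that are already attached to $f$ by the two hypotheses. Fix $k\in[n]$ and $\mathbf{x}\in\prod_{i\in[n]}L_i$, set $c=\varphi_k(x_k)\in\mathcal{R}$, and --- this is the key setup --- choose $\mathbf{d}\in\overline{\varphi}^{\,-1}(c)$ with the specific requirement $d_k=x_k$. Such a $\mathbf{d}$ exists because $c$ belongs to the common range $\mathcal{R}$ of all the $\varphi_i$, and because $\varphi_k(x_k)=c$ already. Since $f(\mathbf{x}_k^0)\leq f(\mathbf{x}_k^1)$ by monotonicity, the target median simplifies to $f(\mathbf{x}_k^0)\vee\bigl(c\wedge f(\mathbf{x}_k^1)\bigr)$, so it suffices to establish the two inequalities separately.

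For the upper bound, I would apply pseudo-horizontal maxitivity to obtain $f(\mathbf{x})=f(\mathbf{x}\wedge\mathbf{d})\vee f([\mathbf{x}]_{\mathbf{d}})$, then rewrite the first summand via pseudo-min homogeneity as $f(\mathbf{x})\wedge c\leq f(\mathbf{x}_k^1)\wedge c$. The choice $d_k=x_k$ forces the $k$-th coordinate of $[\mathbf{x}]_{\mathbf{d}}$ to be $0$, so $[\mathbf{x}]_{\mathbf{d}}\leq\mathbf{x}_k^0$ and monotonicity bounds the second summand by $f(\mathbf{x}_k^0)$. Combining gives $f(\mathbf{x})\leq\bigl(f(\mathbf{x}_k^1)\wedge c\bigr)\vee f(\mathbf{x}_k^0)$. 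For the lower bound, monotonicity gives $f(\mathbf{x})\geq f(\mathbf{x}_k^0)$ for free; and the tuple $\mathbf{x}_k^1\wedge\mathbf{d}$ satisfies $\mathbf{x}_k^1\wedge\mathbf{d}\leq\mathbf{x}$ (its $k$-th coordinate is $1\wedge d_k=x_k$ and each other coordinate is $x_i\wedge d_i\leq x_i$), so pseudo-min homogeneity plus monotonicity yield $c\wedge f(\mathbf{x}_k^1)=f(\mathbf{x}_k^1\wedge\mathbf{d})\leq f(\mathbf{x})$. The two bounds meet, which is exactly the desired identity.

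The only delicate ingredient is the coordinated choice of $\mathbf{d}$ with $d_k=x_k$. This single alignment does double duty: it forces $[\mathbf{x}]_{\mathbf{d}}$ to vanish at coordinate $k$ (placing the ``horizontal'' remainder under $f(\mathbf{x}_k^0)$) while simultaneously keeping $\mathbf{x}_k^1\wedge\mathbf{d}$ below $\mathbf{x}$ (placing $c\wedge f(\mathbf{x}_k^1)$ below $f(\mathbf{x})$). I expect this to be the one step where a less careful choice of $\mathbf{d}$ would make the two directions miss each other; beyond it, the argument is a direct three-line calculation in each direction using only the hypotheses and lattice distributivity.
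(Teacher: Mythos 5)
Your proof is correct and takes essentially the same route as the paper: the same choice of $\mathbf{d}\in\overline{\varphi}^{\,-1}(\varphi_k(x_k))$ with $d_k=x_k$, the same application of pseudo-horizontal maxitivity to split $f(\mathbf{x})$, the same use of pseudo-min homogeneity on $\mathbf{x}\wedge\mathbf{d}=\mathbf{x}_k^1\wedge\mathbf{d}$, and the same observation $[\mathbf{x}]_{\mathbf{d}}\leq\mathbf{x}_k^0$. The only difference is cosmetic: you conclude via two separate inequalities, whereas the paper finishes with a short chain of median/lattice identities.
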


\begin{proof}
Let $\mathbf{x}\in\prod_{i\in\lbrack n]}L_{i}$ and let $k\in\lbrack n]$. If
$f$ is pseudo-horizontally maxitive, say w.r.t. $\varphi_{1},\ldots
,\varphi_{n}$, then $f(\mathbf{x})=f(\mathbf{x}\wedge\mathbf{d})\vee
f([\mathbf{x}]_{\mathbf{d}})$ holds for every $\mathbf{d}\in\overline{\varphi
}^{\,-1}(\varphi_{k}(x_{k}))$ whose $k$th component is $x_{k}$. Now if $f$ is
pseudo-min homogeneous, then $f(\mathbf{x}\wedge\mathbf{d})=f(\mathbf{x}%
_{k}^{1}\wedge\mathbf{d})=f(\mathbf{x}_{k}^{1})\wedge\varphi_{k}(x_{k})$, and
by the definition of $[\mathbf{x}]_{\mathbf{d}}$, we have $f([\mathbf{x}%
]_{\mathbf{d}})\leq f(\mathbf{x}_{k}^{0})$. Thus,
\begin{align*}
f(\mathbf{x})  &  =\mathop{\rm med}\nolimits\big(f(\mathbf{x}_{k}%
^{0}),f(\mathbf{x}),f(\mathbf{x}_{k}^{1})\big)=\big(f(\mathbf{x}_{k}^{0})\vee
f(\mathbf{x})\big)\wedge f(\mathbf{x}_{k}^{1})\\
&  =\big(f(\mathbf{x}_{k}^{0})\vee(f(\mathbf{x}_{k}^{1})\wedge\varphi
_{k}(x_{k}))\big)\wedge f(\mathbf{x}_{k}^{1})=f(\mathbf{x}_{k}^{0}%
)\vee\big(f(\mathbf{x}_{k}^{1})\wedge\varphi_{k}(x_{k})\big)\\
&  =\mathop{\rm med}\nolimits\big(f(\mathbf{x}_{k}^{0}),\varphi_{k}%
(x_{k}),f(\mathbf{x}_{k}^{1})\big).
\end{align*}
Since this holds for every $\mathbf{x}\in\prod_{i\in\lbrack n]}L_{i}$ and
$k\in\lbrack n]$, $f$ is pseudo-median decomposable.
\end{proof}

We can also extend the comonotonic properties as follows. We say that a
function $f\colon\prod_{i\in\lbrack n]}L_{i}\rightarrow L$ is
\emph{pseudo-comonotonic minitive} (resp. \emph{pseudo-comonotonic maxitive})
if there are local utility functions $\varphi_{i}\colon L_{i}\rightarrow L$,
$i\in\lbrack n]$, such that for every $\mathbf{x}$ and $\mathbf{x}^{\prime}$,
if $\overline{\varphi}(\mathbf{x})$ and $\overline{\varphi}(\mathbf{x}%
^{\prime})$ are comonotonic, then%
\[
f(\mathbf{x}\wedge\mathbf{x}^{\prime})=f(\mathbf{x})\wedge f(\mathbf{x}%
^{\prime})\quad
\mbox{(resp. $ f(\mathbf{x}\vee \mathbf{x}') = f(\mathbf{x})\vee f(\mathbf{x}')).$}
\]

The following fact is straightforward.

\begin{fact}
\label{claim0} Every Sugeno utility function of the form
\textup{(\ref{eq:SugenoUtility})} is pseudo-comono\-tonic minitive and
maxitive. Moreover, if a function is pseudo-comonotonic minitive (resp.
pseudo-comonotonic maxitive) and satisfies \textup{(\ref{PseudoIdemp})}, then
it is pseudo-min homogeneous (resp. pseudo-max homogeneous).
\end{fact}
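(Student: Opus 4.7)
The plan is to obtain both statements as consequences of Theorem~\ref{theorem:WLP-comonot} combined with the elementary fact that an order-preserving map between chains preserves binary meets and joins.

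For the first assertion, write $f(\mathbf{x})=q(\varphi_1(x_1),\ldots,\varphi_n(x_n))$ with $q$ a Sugeno integral and each $\varphi_i$ a local utility function. Since each $L_i$ is a chain, for any $a,b\in L_i$ the meet $a\wedge b$ is one of the two arguments, and monotonicity of $\varphi_i$ yields $\varphi_i(a\wedge b)=\varphi_i(a)\wedge\varphi_i(b)$; the same holds for joins. Pulling this across coordinates gives $\overline{\varphi}(\mathbf{x}\wedge\mathbf{x}')=\overline{\varphi}(\mathbf{x})\wedge\overline{\varphi}(\mathbf{x}')$ and, dually, $\overline{\varphi}(\mathbf{x}\vee\mathbf{x}')=\overline{\varphi}(\mathbf{x})\vee\overline{\varphi}(\mathbf{x}')$. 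The hypothesis that $\overline{\varphi}(\mathbf{x})$ and $\overline{\varphi}(\mathbf{x}')$ are comonotonic then lets me apply Theorem~\ref{theorem:WLP-comonot} to the polynomial function $q$, yielding
\[
f(\mathbf{x}\wedge\mathbf{x}')=q(\overline{\varphi}(\mathbf{x}))\wedge q(\overline{\varphi}(\mathbf{x}'))=f(\mathbf{x})\wedge f(\mathbf{x}'),
\]
and symmetrically $f(\mathbf{x}\vee\mathbf{x}')=f(\mathbf{x})\vee f(\mathbf{x}')$.

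For the second assertion, suppose $f$ is pseudo-comonotonic minitive w.r.t.\ $\varphi_1,\ldots,\varphi_n$ and satisfies (\ref{PseudoIdemp}); let $c\in\mathcal{R}$ and $\mathbf{d}\in\overline{\varphi}^{\,-1}(c)$. The key observation is that $\overline{\varphi}(\mathbf{d})=(c,\ldots,c)$ is a constant tuple, hence lies in every standard simplex $L^n_\sigma$, and so is comonotonic with $\overline{\varphi}(\mathbf{x})$ for every $\mathbf{x}$. Pseudo-comonotonic minitivity together with (\ref{PseudoIdemp}) then delivers
\[
f(\mathbf{x}\wedge\mathbf{d})=f(\mathbf{x})\wedge f(\mathbf{d})=f(\mathbf{x})\wedge c,
\]
which is exactly pseudo-min homogeneity; the pseudo-max homogeneous case follows dually.

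I do not anticipate a genuine obstacle here: the one substantive point is recognizing that a constant tuple is comonotonic with everything, which is precisely what makes pseudo-comonotonic minitivity collapse to pseudo-min homogeneity once the values on $\overline{\varphi}^{\,-1}(c)$ are pinned down by (\ref{PseudoIdemp}).
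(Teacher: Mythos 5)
Your argument is correct, and it is essentially the intended one: the paper states this fact without proof (``straightforward''), and the natural justification is exactly what you give — order-preserving maps on chains commute with $\wedge$ and $\vee$, so comonotonic minitivity/maxitivity of the Sugeno integral $q$ (Theorem~\ref{theorem:WLP-comonot}) transfers to $f$, while for the second claim the constant tuple $\overline{\varphi}(\mathbf{d})=(c,\ldots,c)$ lies in every standard simplex and is therefore comonotonic with every $\overline{\varphi}(\mathbf{x})$, so that \textup{(\ref{PseudoIdemp})} turns pseudo-comonotonic minitivity into pseudo-min homogeneity. No gaps.
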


Let $\mathbf{P}$ be the set comprising the properties of pseudo-min
homogeneity, pseudo-horizontal minitivity and pseudo-comonotic minitivity, and
let $\mathbf{P}^{d}$ be the set comprising the corresponding dual properties.
The following result generalizes the various characterizations of polynomial
functions given in Subsection~\ref{subsec:Pol}.

\begin{theorem}
\label{thm:LastMain} Let $f\colon\prod_{i\in[n]}L_{i}\rightarrow L$ be an
order-preserving function.
The following assertions are equivalent:

\begin{enumerate}
\item[$(i)$] $f$ is a Sugeno utility function.

\item[$(ii)$] $f$ is pseudo-median decomposable w.r.t. local utility functions.

\item[$(iii)$] $f$ is $P_{1}\in\mathbf{P}$ and $P_{2}\in\mathbf{P}^{d}$, and
satisfies \textup{(\ref{PseudoIdemp})}.
\end{enumerate}


\end{theorem}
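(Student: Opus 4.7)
The plan is to prove the cycle $(i)\Leftrightarrow(ii)\Rightarrow(iii)\Rightarrow(ii)$. The equivalence $(i)\Leftrightarrow(ii)$ is already recorded as Corollary~\ref{Cor:Med}, so no further argument is needed there; the substance of the theorem is the interplay between $(iii)$ and the other two conditions.

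For $(i)\Rightarrow(iii)$, suppose $f(\mathbf{x})=q(\varphi_{1}(x_{1}),\ldots,\varphi_{n}(x_{n}))$ as in (\ref{eq:SugenoUtility}). Lemma~\ref{lemma:2} gives pseudo-min and pseudo-max homogeneity w.r.t.\ $\varphi_{1},\ldots,\varphi_{n}$, Fact~\ref{fact:1} then yields (\ref{PseudoIdemp}), Lemma~\ref{lemma:4} promotes homogeneity into pseudo-horizontal minitivity and maxitivity, and Fact~\ref{claim0} supplies pseudo-comonotonic minitivity and maxitivity. So $f$ satisfies every property in $\mathbf{P}\cup\mathbf{P}^{d}$ (w.r.t.\ a common family of local utility functions), and any choice of $P_{1}\in\mathbf{P}$ and $P_{2}\in\mathbf{P}^{d}$ works.

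For $(iii)\Rightarrow(ii)$, I would funnel the nine possible combinations of $P_{1}$ and $P_{2}$ into the hypotheses of Lemma~\ref{claim}. Each member of $\mathbf{P}$, combined with order-preservation and (\ref{PseudoIdemp}), forces pseudo-min homogeneity: trivially if $P_{1}$ is pseudo-min homogeneity itself, via Lemma~\ref{lemma:3} if $P_{1}$ is pseudo-horizontal minitivity, and via the second part of Fact~\ref{claim0} if $P_{1}$ is pseudo-comonotonic minitivity. Dually, any $P_{2}\in\mathbf{P}^{d}$ delivers pseudo-max homogeneity. Lemma~\ref{lemma:4} then converts pseudo-max homogeneity into pseudo-horizontal maxitivity, and Lemma~\ref{claim} yields pseudo-median decomposability w.r.t.\ local utility functions, i.e.\ $(ii)$.

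The main obstacle is the bookkeeping about which local utility functions witness each property: the families $\varphi_{i}^{(1)}$ realizing $P_{1}$ and $\varphi_{i}^{(2)}$ realizing $P_{2}$ are a priori distinct, whereas Lemma~\ref{lemma:4} and Lemma~\ref{claim} require a common family. Inspecting Fact~\ref{claim0}, Lemma~\ref{lemma:3}, and Lemma~\ref{lemma:4} shows that every reduction preserves the underlying unary maps, so the content of $(iii)$ is effectively that both $P_{1}$ and $P_{2}$ hold for a single family $\varphi_{1},\ldots,\varphi_{n}$; after making this consistency explicit, the chain of lemmas above runs without further difficulty.
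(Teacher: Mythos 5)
Your proof is correct and follows essentially the same route as the paper: Corollary~\ref{Cor:Med} for $(i)\Leftrightarrow(ii)$; Lemma~\ref{lemma:2}, Fact~\ref{fact:1}, Lemma~\ref{lemma:4} and Fact~\ref{claim0} for $(i)\Rightarrow(iii)$; and a reduction of each formulation of $(iii)$ to the hypotheses of Lemma~\ref{claim} via Lemma~\ref{lemma:3}, Fact~\ref{claim0} and Lemma~\ref{lemma:4}. You merely make explicit the case analysis over $\mathbf{P}\times\mathbf{P}^{d}$ and the requirement that all properties be witnessed by a common family of local utility functions, which the paper's terser proof leaves implicit.
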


\begin{proof}
By Corollary~\ref{Cor:Med}, we have $(i)\Leftrightarrow(ii)$. By
Lemma~\ref{lemma:2}, we also have that if $(i)$ holds, then $f$ is pseudo-min
homogeneous and pseudo-max homogeneous. Furthermore, by Fact~\ref{claim0} and
Lemmas~\ref{lemma:3},~\ref{lemma:4} and \ref{claim}, we have that any two
formulations of $(iii)$ are equivalent. By Lemma~\ref{claim},
$(iii)\Rightarrow(ii)$.

\end{proof}

\begin{remark}
By Fact~\ref{fact:1}, if $P_{1}$ and $P_{2}$ are the pseudo-homogeneity
properties, then (\ref{PseudoIdemp}) becomes redundant in $(iii)$. Similarly,
by Lemma~\ref{claim}, Corollary~\ref{Cor:Med}, and $(i)\Rightarrow(iii)$ of
Theorem~\ref{thm:LastMain}, if $P_{1}$ is pseudo-min homogeneity
(pseudo-horizontal minitivity) property, and $P_{2}$ is pseudo-horizontal
maxitivity (pseudo-max homogeneity) property, then (\ref{PseudoIdemp}) is
redundant in $(iii)$.
\end{remark}

\begin{remark}
Note that if a function is pseudo-comonotonic minitive or pseudo-comonotonic
maxitive (w.r.t. $\varphi_{k}\colon L_{k}\rightarrow L$, $k\in\left[
n\right]  $), then it is order-preserving on every set
\[
S_{\varphi,\sigma}^{n}=\bigl\{\mathbf{x}\in\prod_{i\in\lbrack n]}%
L_{i}:\overline{\varphi}(\mathbf{x})\in L_{\sigma}^{n}\bigr\}\subseteq
\prod_{i\in\lbrack n]}L_{i}.
\]
As it turns out, this fact can be extended to the whole domain $\prod
_{i\in\lbrack n]}L_{i}.$ To illustrate, let $\mathbf{x}=(x_{1},x_{2}%
,\dots,x_{n})\in L^{n}$ and $\mathbf{y}=(y_{1},x_{2},\dots,x_{n})\in L^{n}$
such that $\overline{\varphi}(\mathbf{x})$ and $\overline{\varphi}%
(\mathbf{y})$ are not comonotonic, say%
\begin{align*}
\varphi_{1}\left(  x_{1}\right)   &  <\varphi_{2}\left(  x_{2}\right)
\leq\varphi_{3}\left(  x_{3}\right)  \leq\cdots\leq\varphi_{n}\left(
x_{n}\right)  ,\\
\varphi_{2}\left(  x_{2}\right)   &  <\varphi_{1}\left(  y_{1}\right)
\leq\varphi_{3}\left(  x_{3}\right)  \leq\cdots\leq\varphi_{n}\left(
x_{n}\right)  .
\end{align*}
Since $\varphi_{1}$ and $\varphi_{2}$ have the same range, there exists
$z_{1}\in L_{1}$ such that $\varphi_{1}(z_{1})=\varphi_{2}(x_{2})$. Then, for
$\mathbf{z}=(z_{1},x_{2},\dots,x_{n})$, $\overline{\varphi}(\mathbf{z})$ is
comonotonic with $\overline{\varphi}(\mathbf{x})$ and $\overline{\varphi
}(\mathbf{y})$, and $\mathbf{x}<\mathbf{z}<\mathbf{y}$. Now, if $f\colon
\prod_{i\in\lbrack n]}L_{i}\rightarrow L$ is pseudo-comonotonic minitive or
pseudo-comonotonic maxitive (w.r.t. $\varphi_{k}\colon L_{k}\rightarrow L$,
$k=1,\ldots,n$), then $f(\mathbf{x})\leq f(\mathbf{z})\leq f(\mathbf{y})$. The
same idea, taking middle-points and applying it componentwise, can be used to
show that if a function is pseudo-comonotonic minitive or pseudo-comonotonic
maxitive, then it is order-preserving.
\end{remark}

\section{Factorization of Sugeno utility functions\label{sect construction}}

In this section we present an algorithm that decides whether a given function
$f\colon\prod_{i\in\lbrack n]}L_{i}\rightarrow L$ has a factorization of the
form (\ref{eq:SugenoUtility}) and constructs such a factorization if one
exists. The algorithm terminates in a finite number of steps only if the
chains $L_{1},\ldots,L_{n}$ are finite, but the construction behind the
algorithm works for infinite bounded chains as well. Therefore we state the
main result of this section (Theorem~\ref{thm alg correct}) without the
finiteness assumption, allowing the algorithm to perform infinitely many steps
to produce the desired output. However, we will need to make the additional
assumption that the chain $L$ is \emph{complete}, i.e., that every subset
$S\subseteq L$ has an infimum (denoted by $\bigwedge S$) and a supremum
(denoted by $\bigvee S$). Clearly, every finite chain and every closed real
interval is complete.

To ensure that the algorithm works correctly, we will also need two reasonable
assumptions on the function $f$. The first is that $f$ has no inessential
variables, i.e., it depends on all of its variables. If this is not the case,
e.g., $f$ does not depend on its first variable, then there is a function
$g\colon L_{2}\times\cdots\times L_{n}\rightarrow L$ such that $f\left(
x_{1},\ldots,x_{n}\right)  =g\left(  x_{2},\ldots,x_{n}\right)  $. Thus we can
apply the algorithm to the function $g$ instead of $f$ (if $g$ still has
inessential variables, then we can eliminate them in a similar way). The
second assumption is that%
\begin{equation}
f\left(  \mathbf{0}\right)  =0\text{ and }f\left(  \mathbf{1}\right)
=1.\label{eq BC for f}%
\end{equation}
If this condition is not met, then the parts of $L$ that lie outside the
interval $\left[  f\left(  \mathbf{0}\right)  ,f\left(  \mathbf{1}\right)
\right]  $ are negligible; we may remove them without changing the problem.
However, we do not need the assumption that the local utility functions
$\varphi_{i}$ share the same range $\mathcal{R}$.

In Subsection~\ref{subsect algo} we first give the intuitive idea behind our
construction, and then present Algorithm~\ref{SUFF} (Sugeno Utility Function
Factorization or SUFF for short). We work out an example in
Subsection~\ref{subsect example}, and in Subsection~\ref{subsect correct} we
prove the correctness of algorithm SUFF.

\subsection{The algorithm SUFF\label{subsect algo}}

To present the basic idea of our algorithm, let us suppose that $f(\mathbf{x}%
)=q(\varphi_{1}(x_{1}),\ldots,\varphi_{n}(x_{n}))$ is a Sugeno utility
function, and let us try to extract information about the local utility
functions $\varphi_{k}$ from the overall utility function $f$. For notational
simplicity, we consider only the case $k=1$; the other cases can be treated
similarly. In this case, the pseudo-median decomposition formula
(\ref{medianDecomp}) takes the form%
\[
f\left(  x_{1},x_{2},\ldots,x_{n}\right)  =\operatorname{med}\left(  f\left(
0,x_{2},\ldots,x_{n}\right)  ,\varphi_{1}\left(  x_{1}\right)  ,f\left(
1,x_{2},\ldots,x_{n}\right)  \right)  .
\]
By fixing the variables $x_{2},\ldots,x_{n}$, the left hand side becomes a
unary function in the variable $x_{1}$, and on the right hand side we have the
median of $\varphi_{1}\left(  x_{1}\right)  $ and the two constants
$s=f\left(  0,x_{2},\ldots,x_{n}\right)  ,t=f\left(  1,x_{2},\ldots
,x_{n}\right)  $.%

\begin{figure}
[ptb]
\begin{center}
\includegraphics[
natheight=5.393500cm,
natwidth=5.393500cm,
height=5.3642cm,
width=5.3642cm
]%
{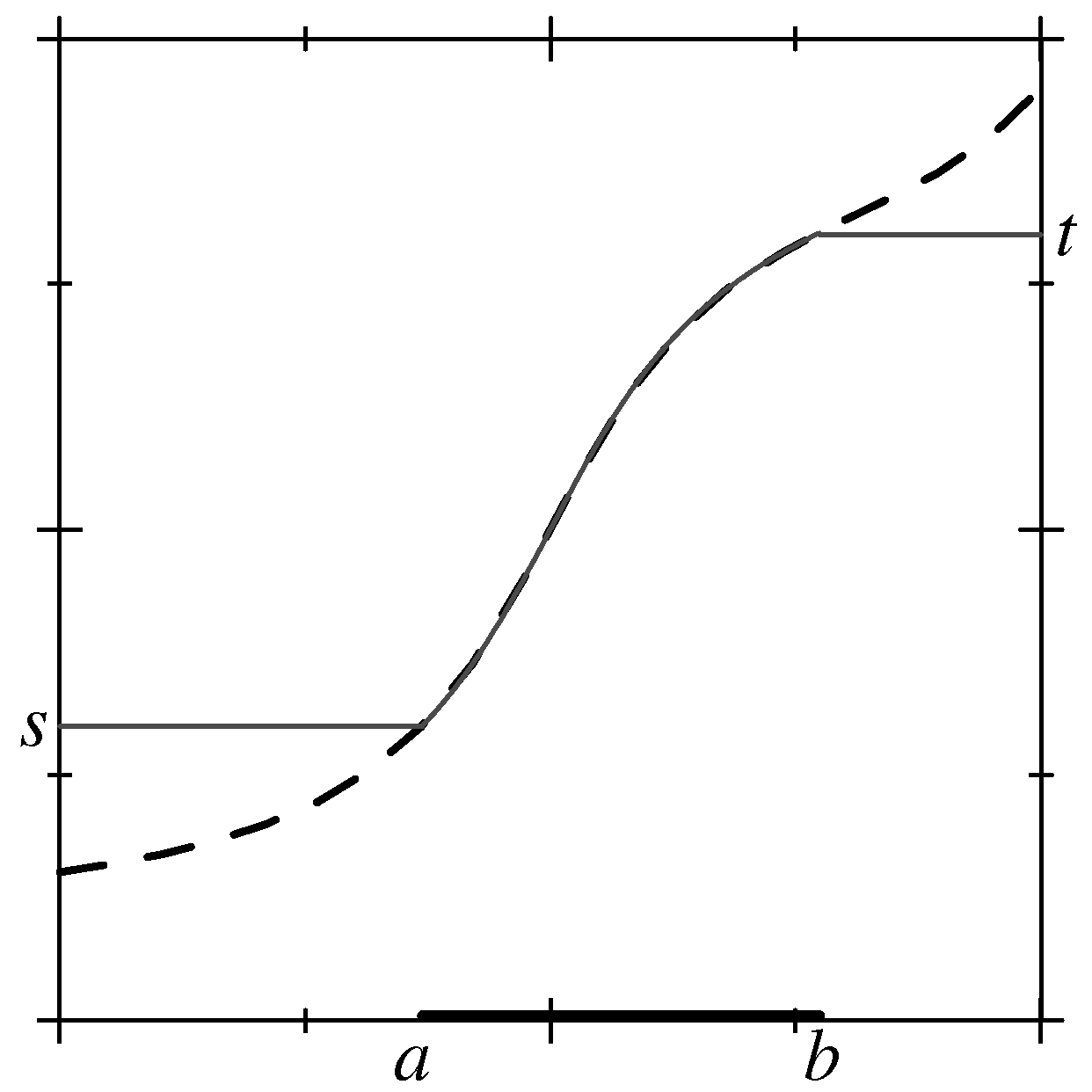}%
\caption{The graph of $\varphi_{1}$ as seen through a window}%
\label{fig2}%
\end{center}
\end{figure}

Figure~\ref{fig2} depicts this situation, where $L_{1}$ and $L$ are chosen to
be the unit interval $\left[  0,1\right]  \subseteq\mathbb{R}$, and the graphs
of $f\left(  x_{1},x_{2},\ldots,x_{n}\right)  $ and $\varphi_{1}\left(
x_{1}\right)  $ are represented by solid and dashed curves, respectively.
Observe that these two curves coincide on the interval $\left]  a,b\right[
=\left\{  x_{1}\in L_{1}\colon s<f\left(  x_{1},x_{2},\ldots,x_{n}\right)
<t\right\}  $, in other words, we can see some part of the graph of
$\varphi_{1}$ through the \textquotedblleft window\textquotedblright\ $\left]
a,b\right[  $. To the left of this window $s$ gives an upper bound for
$\varphi_{1}\left(  x_{1}\right)  $, while on the right hand side of the
window $t$ gives a lower bound. By fixing the variables $x_{2},\ldots,x_{n}$
to some other values, we may open other windows which may expose other parts
of the graph of $\varphi_{1}$. If we could find sufficiently many windows,
then we could recover $\varphi_{1}$. Unfortunately, this is not always the
case. In fact, as we shall see in the example of
Subsection~\ref{subsect example}, the local utility functions are not always
uniquely determined by $f$.

For any given $x_{1}\in L_{1}$, let us collect the tuples $\left(
x_{2},\ldots,x_{n}\right)  $ that open a window to $\varphi_{1}\left(
x_{1}\right)  $ into the set $\mathcal{W}_{x_{1}}$. Similarly, let
$\mathcal{L}_{x_{1}}$ and $\mathcal{U}_{x_{1}}$ be the sets of tuples that
provide only lower and upper bounds, respectively, and let $\mathcal{E}%
_{x_{1}}$ contain the remaining tuples of $L_{2}\times\cdots\times L_{n}$:%
\begin{align*}
\mathcal{W}_{x_{1}}  &  =\left\{  \left(  x_{2},\ldots,x_{n}\right)  :f\left(
0,x_{2},\ldots,x_{n}\right)  <f\left(  x_{1},x_{2},\ldots,x_{n}\right)
<f\left(  1,x_{2},\ldots,x_{n}\right)  \right\}  ,\\
\mathcal{L}_{x_{1}}  &  =\left\{  \left(  x_{2},\ldots,x_{n}\right)  :f\left(
0,x_{2},\ldots,x_{n}\right)  <f\left(  x_{1},x_{2},\ldots,x_{n}\right)
=f\left(  1,x_{2},\ldots,x_{n}\right)  \right\}  ,\\
\mathcal{U}_{x_{1}}  &  =\left\{  \left(  x_{2},\ldots,x_{n}\right)  :f\left(
0,x_{2},\ldots,x_{n}\right)  =f\left(  x_{1},x_{2},\ldots,x_{n}\right)
<f\left(  1,x_{2},\ldots,x_{n}\right)  \right\}  ,\\
\mathcal{E}_{x_{1}}  &  =\left\{  \left(  x_{2},\ldots,x_{n}\right)  :f\left(
0,x_{2},\ldots,x_{n}\right)  =f\left(  x_{1},x_{2},\ldots,x_{n}\right)
=f\left(  1,x_{2},\ldots,x_{n}\right)  \right\}  .
\end{align*}
Observe that $\mathcal{E}_{x_{1}}$ bears no information on $x_{1}$; we only
introduce it for notational convenience. Furthermore, let us define the sets
$\mathcal{W}_{x_{1}}^{f},\mathcal{L}_{x_{1}}^{f},\mathcal{U}_{x_{1}}^{f}$ as
follows:%
\begin{align*}
\mathcal{W}_{x_{1}}^{f}  &  =\left\{  f\left(  x_{1},x_{2},\ldots
,x_{n}\right)  \colon\left(  x_{2},\ldots,x_{n}\right)  \in\mathcal{W}_{x_{1}%
}\right\}  ,\\
\mathcal{L}_{x_{1}}^{f}  &  =\left\{  f\left(  x_{1},x_{2},\ldots
,x_{n}\right)  \colon\left(  x_{2},\ldots,x_{n}\right)  \in\mathcal{L}_{x_{1}%
}\right\}  ,\\
\mathcal{U}_{x_{1}}^{f}  &  =\left\{  f\left(  x_{1},x_{2},\ldots
,x_{n}\right)  \colon\left(  x_{2},\ldots,x_{n}\right)  \in\mathcal{U}_{x_{1}%
}\right\}  .
\end{align*}

Note that $\mathcal{W}_{x_{1}}^{f}$ cannot have more than one element, for
otherwise $f$ is not a Sugeno utility function. If $\mathcal{W}_{x_{1}}^{f}$
is a one-element set, then let $w_{x_{1}}$ denote its unique element:%
\begin{equation}
w_{x_{1}}=f\left(  x_{1},x_{2},\ldots,x_{n}\right)  \text{ if }\left(
x_{2},\ldots,x_{n}\right)  \in\mathcal{W}_{x_{1}}. \label{eq w}%
\end{equation}
Furthermore, let $l_{x_{1}}$ and $u_{x_{1}}$ be given as follows:%
\begin{align}
l_{x_{1}}  &  =\bigvee\mathcal{L}_{x_{1}}^{f}\text{ if }\mathcal{L}_{x_{1}%
}\neq\emptyset,\label{eq l}\\
u_{x_{1}}  &  =\bigwedge\mathcal{U}_{x_{1}}^{f}\text{ if }\mathcal{U}_{x_{1}%
}\neq\emptyset. \label{eq u}%
\end{align}
If any of the sets $\mathcal{W}_{x_{1}},\mathcal{L}_{x_{1}},\mathcal{U}%
_{x_{1}}$ is empty, then the corresponding values $w_{x_{1}},l_{x_{1}%
},u_{x_{1}}$ are undefined. From the above considerations it is clear that
$\varphi_{1}$ satisfies the (in)equalities%
\begin{equation}
\varphi_{1}\left(  x_{1}\right)  =w_{x_{1}},~\varphi_{1}\left(  x_{1}\right)
\geq l_{x_{1}},~\varphi_{1}\left(  x_{1}\right)  \leq u_{x_{1}},
\label{eq consistency fi}%
\end{equation}
whenever the right hand sides are defined.

Let us define a function $\varphi_{1}^{f}\colon L_{1}\rightarrow L$ by making
use of the following four rules:

\begin{enumerate}
\item[(W)] if $\mathcal{W}_{x_{1}}\neq\emptyset$ then let $\varphi_{1}%
^{f}\left(  x_{1}\right)  =w_{x_{1}}$;

\item[(L)] if $\mathcal{W}_{x_{1}}=\emptyset,\mathcal{L}_{x_{1}}\neq
\emptyset,\mathcal{U}_{x_{1}}=\emptyset$ then let $\varphi_{1}^{f}\left(
x_{1}\right)  =l_{x_{1}}$;

\item[(U)] if $\mathcal{W}_{x_{1}}=\emptyset,\mathcal{L}_{x_{1}}%
=\emptyset,\mathcal{U}_{x_{1}}\neq\emptyset$ then let $\varphi_{1}^{f}\left(
x_{1}\right)  =u_{x_{1}}$;

\item[(LU)] if $\mathcal{W}_{x_{1}}=\emptyset,\mathcal{L}_{x_{1}}\neq
\emptyset,\mathcal{U}_{x_{1}}\neq\emptyset$ then let $\varphi_{1}^{f}\left(
x_{1}\right)  =l_{x_{1}}$.
\end{enumerate}

\noindent Observe that the four cases above cover all possibilities since
$\mathcal{W}_{x_{1}}=\mathcal{U}_{x_{1}}=\mathcal{L}_{x_{1}}=\emptyset$ is
ruled out by the assumption that $f$ depends on its first variable. It is
important to note that $\varphi_{1}^{f}$ is computed only from $f$, without
reference to $\varphi_{1}$.

We can define functions $\varphi_{k}^{f}\colon L_{k}\rightarrow L$ for each
$k\in\left[  n\right]  $ in a similar manner, and we will prove that if $f$ is
a Sugeno utility function, then these are local utility functions and they
provide a factorization $f\left(  \mathbf{x}\right)  =q^{f}\bigl(\varphi
_{1}^{f}\left(  x_{1}\right)  ,\ldots,\varphi_{n}^{f}\left(  x_{n}\right)
\bigr)$, where $q^{f}$ is the Sugeno integral given in
Theorem~\ref{thm dnf for pseudo-Sugeno}:%
\[
q^{f}\left(  y_{1},\ldots,y_{n}\right)  =\bigvee\limits_{I\subseteq\left[
n\right]  }\bigl(f\left(  \mathbf{e}_{I}\right)  \wedge\bigwedge\limits_{i\in
I}y_{i}\bigr).
\]
Note that (\ref{eq BC for f}) implies that the polynomial $q^{f}$ is indeed a
Sugeno integral.

Algorithm~\ref{SUFF}, which will be referred to as algorithm SUFF in the
sequel, summarizes the construction of the local utility functions
$\varphi_{k}^{f}$ and the Sugeno integral $q^{f}$. The value \textbf{false} is
returned if

\begin{itemize}
\item $f$ is not order-preserving (line~\ref{algline OP not OK}),

\item several different values for $\varphi_{k}\left(  x_{k}\right)  $ are
seen through some windows (line \ref{algline W not OK}),

\item the values $l_{x_{1}},w_{x_{1}},u_{x_{1}}$ are contradictory
(line~\ref{algline l<w<u not OK}), or

\item $f$ does not depend on all of its variables
(line~\ref{algline ess not OK}).
\end{itemize}

\noindent Otherwise the output is $q^{f}$ and $\varphi_{k}^{f}\left(
k\in\left[  n\right]  \right)  $, which are computed as explained above.

In the next subsection we will prove the following theorem, which ensures the
correctness of algorithm SUFF.

\begin{algorithm}
\caption{Sugeno Utility Function Factorization}
\label{SUFF}
\begin{algorithmic}[1]
\renewcommand{\algorithmiccomment}[1]{\hfill // #1}
\REQUIRE $f$ depends on all of its variables and satisfies  (\ref{eq BC for f})
\IF{$f$ is not order-preserving}
\RETURN {\bf false} \COMMENT{$f$ is not a SUF}\label{algline OP not OK}
\ENDIF
\FOR{$k\in \left[ n\right] $}
\FOR{$x_{k}\in L_{k}$}
\STATE compute $\mathcal{W}^{f}_{x_{k}}$
\IF{$\left\vert \mathcal{W}^{f}_{x_{k}} \right\vert \geq 2$}
\RETURN {\bf false} \COMMENT{$f$ is not a SUF}\label{algline W not OK}
\ENDIF
\STATE compute $\mathcal{L}^{f}_{x_{k}}, \mathcal{U}^{f}_{x_{k}}$ and $w_{x_{k}}, l_{x_{k}}, u_{x_{k}}$
\IF{$ l_{x_{k}}>u_{x_{k}}$ {\bf or} $ l_{x_{k}}>w_{x_{k}}$ {\bf or} $w_{x_{k}}>u_{x_{k}}$}
\RETURN {\bf false} \COMMENT{$f$ is not a SUF}\label{algline l<w<u not OK}
\ENDIF
\IF{$\mathcal{W}_{x_{k}}\neq \emptyset $}
\STATE $\varphi _{k}^{f}\left(x_{k}\right) :=w_{x_{k}}$ \COMMENT{(W)}
\ELSIF{$\mathcal{L}_{x_{k}}\neq \emptyset$}
\STATE $\varphi _{k}^{f}\left( x_{k}\right):=l_{x_{k}}$ \COMMENT{(L),(LU)}
\ELSIF{$\mathcal{U}_{x_{k}}\neq \emptyset $}
\STATE $\varphi _{k}^{f}\left( x_{k}\right):=u_{x_{k}}$ \COMMENT{(U)}
\ELSE \RETURN {\bf false} \COMMENT{$x_k$ is inessential}\label{algline ess not OK}
\ENDIF
\ENDFOR
\ENDFOR
\STATE compute $q^f$
\RETURN $q^{f},\varphi _{1}^{f},\ldots ,\varphi _{n}^{f}$ \COMMENT{$f$ is a SUF}
\end{algorithmic}
\end{algorithm}

\begin{theorem}
\label{thm alg correct}If $f\colon\prod_{i\in\lbrack n]}L_{i}\rightarrow L$ is
an order-preserving pseudo-Sugeno integral, then algorithm SUFF constructs a
Sugeno integral $q^{f}$ and local utility functions $\varphi_{1}^{f}%
,\ldots,\varphi_{n}^{f}$ such that
\[
f\left(  \mathbf{x}\right)  =q^{f}\bigl(\varphi_{1}^{f}\left(  x_{1}\right)
,\ldots,\varphi_{n}^{f}\left(  x_{n}\right)  \bigr).
\]
Otherwise, the algorithm outputs the value {\emph{\textbf{false}}}.
\end{theorem}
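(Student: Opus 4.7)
The plan is to prove the theorem via two facts that together cover both directions: (a) if $f$ is an order-preserving pseudo-Sugeno integral, then none of the four failure tests in SUFF is triggered, and (b) whenever all four tests pass, the constructed $q^f,\varphi_1^f,\ldots,\varphi_n^f$ genuinely factorises $f$, with each $\varphi_k^f$ a local utility function. Since any failed test sends SUFF to \textbf{false}, (b) handles the negative direction of the theorem, and (a) together with (b) handles the positive direction.

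To establish (a), I would use Theorem~\ref{thm:Med} to fix unary functions $\varphi_k\colon L_k\to L$ ($k\in[n]$) with $f(\mathbf{x})=\operatorname{med}(s,\varphi_k(x_k),t)$ for every $\mathbf{x}$, where $s=f(\mathbf{x}_k^0)$ and $t=f(\mathbf{x}_k^1)$. If the tuple $(x_1,\ldots,\widehat{x_k},\ldots,x_n)$ lies in $\mathcal{W}_{x_k}$, the median identity (with $s<f(\mathbf{x})<t$) forces $s<\varphi_k(x_k)<t$ and $f(\mathbf{x})=\varphi_k(x_k)$, so every element of $\mathcal{W}^f_{x_k}$ equals $\varphi_k(x_k)$ and $w_{x_k}=\varphi_k(x_k)$. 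If the tuple lies in $\mathcal{L}_{x_k}$ (respectively $\mathcal{U}_{x_k}$), the identity forces $\varphi_k(x_k)\geq t$ (respectively $\leq s$); passing to supremum/infimum yields $l_{x_k}\leq\varphi_k(x_k)\leq u_{x_k}$ wherever defined. Hence the three inner tests of SUFF pass, and lines~\ref{algline OP not OK} and~\ref{algline ess not OK} are handled by the standing hypotheses on $f$.

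The heart of (b) is to show, using only order-preservation of $f$ and the success of the inner tests, that
$$f(\mathbf{x})=\operatorname{med}\bigl(f(\mathbf{x}_k^0),\varphi_k^f(x_k),f(\mathbf{x}_k^1)\bigr)$$
for every $k\in[n]$ and every $\mathbf{x}$. When $s=t$, order-preservation of $f$ squeezes $f(\mathbf{x})=s=t$ and the identity is trivial. When $s<t$, the tuple $(x_1,\ldots,\widehat{x_k},\ldots,x_n)$ lies in exactly one of $\mathcal{W}_{x_k}$, $\mathcal{L}_{x_k}$, $\mathcal{U}_{x_k}$, and one performs a case split over which rule -- (W), (L), (U), or (LU) -- defines $\varphi_k^f(x_k)$. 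In each live combination the passed inequalities $l_{x_k}\leq w_{x_k}\leq u_{x_k}$, together with one of the membership statements ($w_{x_k}=f(\mathbf{x})$, $t\in\mathcal{L}^f_{x_k}$, or $s\in\mathcal{U}^f_{x_k}$), force $\operatorname{med}(s,\varphi_k^f(x_k),t)=f(\mathbf{x})$. Verifying that every combination of rule and set membership either is excluded or yields this equality is the principal technical obstacle of the proof.

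Order-preservation of $\varphi_k^f$ then follows from the monotonicities $\mathcal{L}_{x_k}\subseteq\mathcal{L}_{x_k'}$ and $\mathcal{U}_{x_k'}\subseteq\mathcal{U}_{x_k}$ for $x_k\leq x_k'$ (both immediate from order-preservation of $f$), which yield $l_{x_k}\leq l_{x_k'}$ and $u_{x_k}\leq u_{x_k'}$; the case where rule (W) applies at both points reduces to these bounds via the consistency inequalities. Once pseudo-median decomposability of $f$ with respect to the order-preserving $\varphi_k^f$ is in hand, Theorem~\ref{thm dnf for pseudo-Sugeno} delivers $f(\mathbf{x})=p(\varphi_1^f(x_1),\ldots,\varphi_n^f(x_n))$ with $p$ the DNF polynomial defining $q^f$, and assumption (\ref{eq BC for f}) promotes $p$ to the Sugeno integral $q^f$, completing the proof.
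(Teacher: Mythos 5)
Your overall architecture coincides with the paper's: extract the constraints $\varphi_k(x_k)=w_{x_k}$, $\varphi_k(x_k)\geq l_{x_k}$, $\varphi_k(x_k)\leq u_{x_k}$ from the pseudo-median decomposition (Theorem~\ref{thm:Med}) to show no test fires when $f$ is an order-preserving pseudo-Sugeno integral; conversely, when no test fires, prove that $f$ is pseudo-median decomposable w.r.t.\ the constructed $\varphi_k^f$ and that these are order-preserving, then finish via Theorem~\ref{thm dnf for pseudo-Sugeno} and (\ref{eq BC for f}). Part (a) is fine, and the decomposability step you call the ``principal technical obstacle'' is lighter than you fear: since the construction together with the passed tests gives $\varphi_k^f(x_k)=w_{x_k}$, $\varphi_k^f(x_k)\geq l_{x_k}$, $\varphi_k^f(x_k)\leq u_{x_k}$ under \emph{every} rule (this is (\ref{eq consistency fif})), a single case split on whether the fixed tuple lies in $\mathcal{W}_{x_k}$, $\mathcal{L}_{x_k}$, $\mathcal{U}_{x_k}$ or $\mathcal{E}_{x_k}$ suffices, with no split over rules.

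The genuine gap is your argument that $\varphi_k^f$ is order-preserving. The monotonicities $l_a\leq l_b$, $u_a\leq u_b$ (for $a\leq b$) together with the per-point inequalities $l\leq w\leq u$ do \emph{not} imply $\varphi_k^f(a)\leq\varphi_k^f(b)$. In the (W)--(W) case they only give $w_a\leq u_a\leq u_b$ and $l_a\leq l_b\leq w_b$, which is compatible with $w_a>w_b$ (e.g.\ when the $l$'s are undefined); the algorithm performs no cross-point check, so nothing you cite rules this out. The correct argument must return to the order-preservation of $f$ and follow a witness tuple: if $(x_2,\ldots,x_n)\in\mathcal{W}_a$, it lands in $\mathcal{W}_b$ or in $\mathcal{L}_b$, whence $w_a=f(a,x_2,\ldots,x_n)\leq f(b,x_2,\ldots,x_n)\leq\varphi_1^f(b)$. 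The same problem occurs when (W) applies at exactly one of $a,b$, and in the case of rule (U) at $a$ versus (L)/(LU) at $b$, where one needs $u_a\leq l_b$ although $l_a$ and possibly $u_b$ are undefined; there one must use that $\mathcal{E}_a=\mathcal{E}_b$, hence $\mathcal{L}_a\cup\mathcal{U}_a=\mathcal{L}_b\cup\mathcal{U}_b$, and pick a transfer tuple in $\mathcal{U}_a\cap\mathcal{L}_b$ to get $u_a\leq f(a,x_2,\ldots,x_n)\leq f(b,x_2,\ldots,x_n)\leq l_b$. This tuple-tracking is precisely the content of the paper's Lemma~\ref{lemma fi order preserving} (see also Remark~\ref{rem fi order preserving}) and cannot be replaced by the bounds you invoke; only the same-rule cases not involving a window ((L)/(LU) versus (L)/(LU), and (U)--(U)) follow from $l_a\leq l_b$ and $u_a\leq u_b$ alone.
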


It is clear that every Sugeno utility function is an order-preserving
pseudo-Sugeno integral. Conversely, if $f$ is an order-preserving
pseudo-Sugeno integral, then the algorithm SUFF produces a factorization of
$f$ into a composition of a Sugeno integral and local utility functions by
Theorem~\ref{thm alg correct}. Thus $f$ is a Sugeno utility function.

\begin{corollary}
\label{cor SUF=OP PSI}The class of Sugeno utility functions coincides with the
class of order-preserving pseudo-Sugeno integrals.
\end{corollary}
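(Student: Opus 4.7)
The plan is to prove the two inclusions separately, with one being essentially tautological and the other being a direct corollary of Theorem~\ref{thm alg correct}.

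For the forward inclusion, I would argue that every Sugeno utility function is an order-preserving pseudo-Sugeno integral. Given $f(\mathbf{x}) = q(\varphi_{1}(x_{1}),\ldots,\varphi_{n}(x_{n}))$ where $q$ is a Sugeno integral and each $\varphi_{i}$ is a local utility function, each $\varphi_{i}$ is order-preserving and therefore satisfies the boundary condition $\varphi_{i}(0) \leq \varphi_{i}(x) \leq \varphi_{i}(1)$, so $f$ qualifies as a pseudo-Sugeno integral. Moreover, $q$ is order-preserving by Remark~\ref{rem:Good}, and hence $f$, being a composition of order-preserving maps, is itself order-preserving.

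For the reverse inclusion, I would invoke Theorem~\ref{thm alg correct}. Suppose $f$ is an order-preserving pseudo-Sugeno integral. Under the assumptions that $f$ depends on all of its variables and that $f(\mathbf{0}) = 0$ and $f(\mathbf{1}) = 1$, Theorem~\ref{thm alg correct} ensures that algorithm SUFF produces a Sugeno integral $q^{f}$ together with local utility functions $\varphi_{1}^{f},\ldots,\varphi_{n}^{f}$ such that $f(\mathbf{x}) = q^{f}(\varphi_{1}^{f}(x_{1}),\ldots,\varphi_{n}^{f}(x_{n}))$, exhibiting $f$ as a Sugeno utility function. The two auxiliary hypotheses on $f$ are not restrictive: inessential variables may be eliminated and reinstated afterwards with trivial (constant) local components, while values of $L$ outside the interval $[f(\mathbf{0}), f(\mathbf{1})]$ are irrelevant to the factorization problem and can be discarded, as explained in the preamble to Algorithm~\ref{SUFF}.

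The main obstacle is not in the corollary itself but is packed entirely inside Theorem~\ref{thm alg correct}, which performs the substantive reconstruction of the inner maps from the graph of $f$. Once that theorem is granted, the corollary follows immediately from the two observations above and the definition of the two function classes.
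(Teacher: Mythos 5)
Your proposal is correct and follows essentially the same route as the paper: the forward inclusion is immediate since local utility functions are order-preserving and satisfy the boundary conditions, and the reverse inclusion is obtained by invoking Theorem~\ref{thm alg correct} to have algorithm SUFF produce the factorization $f(\mathbf{x})=q^{f}\bigl(\varphi_{1}^{f}(x_{1}),\ldots,\varphi_{n}^{f}(x_{n})\bigr)$. Your extra remarks on discharging the auxiliary hypotheses (inessential variables and the normalization $f(\mathbf{0})=0$, $f(\mathbf{1})=1$) only make explicit what the paper handles in the preamble to Algorithm~\ref{SUFF}.
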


Note that the same Sugeno utility function can have several different
factorizations, hence starting with a function $f\left(  \mathbf{x}\right)
=q\left(  \varphi_{1}\left(  x_{1}\right)  ,\ldots,\varphi_{n}\left(
x_{n}\right)  \right)  $, just as we did at the beginning of this subsection,
the factorization $f\left(  \mathbf{x}\right)  =q^{f}\bigl(\varphi_{1}%
^{f}\left(  x_{1}\right)  ,\ldots,\varphi_{n}^{f}\left(  x_{n}\right)  \bigr)$
provided by the algorithm may be a different one (see the example in the next subsection).

\subsection{An example\label{subsect example}}

Let us illustrate our construction with a concrete (albeit fictitious)
example. Customers evaluate hotels along three criteria, namely quality of
services, price, and whether the hotel has a good location. Service is
evaluated on a four-level scale $L_{1}$: \texttt{*}$<$\texttt{**}%
$<$\texttt{***}$<$\texttt{****}, price is evaluated on a three-level scale
$L_{2}$: \texttt{-}$<$\texttt{0}$<$\texttt{+} (where \textquotedblleft%
\texttt{-}\textquotedblright means expensive, thus less desirable, and
\textquotedblleft\texttt{+}\textquotedblright means cheap, thus more
desirable), and the third scale is $L_{3}$: \texttt{n(o)}$<$\texttt{y(es)}. In
addition, each hotel receives an overall rating on the scale $L:1<\cdots<8$,
which gives the overall utility function $f\colon L_{1}\times L_{2}\times
L_{3}\rightarrow L$ (see Table~\ref{table example OUF}). We will find a
factorization of this function, and we will analyze its structure in order to
draw conclusions about the nature of the \textquotedblleft human
aggregation\textquotedblright\ that the customers (unconsciously) perform when
forming their opinions about hotels.

\begin{table}[p]
\centering\renewcommand{\arraystretch}{1.2}
\begin{tabular}
[t]{cccc}\hline
\multicolumn{1}{|c}{\ service$\ $} & \multicolumn{1}{|c}{\ price$\ $} &
\multicolumn{1}{|c}{\ location$\ $} & \multicolumn{1}{||c|}{$\ f\ $%
}\\\hline\hline
\multicolumn{1}{|c}{\texttt{*}} & \multicolumn{1}{|c}{\texttt{-}} &
\multicolumn{1}{|c}{\texttt{n}} & \multicolumn{1}{||c|}{1}\\\hline
\multicolumn{1}{|c}{\texttt{**}} & \multicolumn{1}{|c}{\texttt{-}} &
\multicolumn{1}{|c}{\texttt{n}} & \multicolumn{1}{||c|}{2}\\\hline
\multicolumn{1}{|c}{\texttt{***}} & \multicolumn{1}{|c}{\texttt{-}} &
\multicolumn{1}{|c}{\texttt{n}} & \multicolumn{1}{||c|}{2}\\\hline
\multicolumn{1}{|c}{\texttt{****}} & \multicolumn{1}{|c}{\texttt{-}} &
\multicolumn{1}{|c}{\texttt{n}} & \multicolumn{1}{||c|}{2}\\\hline
\multicolumn{1}{|c}{\texttt{*}} & \multicolumn{1}{|c}{\texttt{0}} &
\multicolumn{1}{|c}{\texttt{n}} & \multicolumn{1}{||c|}{2}\\\hline
\multicolumn{1}{|c}{\texttt{**}} & \multicolumn{1}{|c}{\texttt{0}} &
\multicolumn{1}{|c}{\texttt{n}} & \multicolumn{1}{||c|}{2}\\\hline
\multicolumn{1}{|c}{\texttt{***}} & \multicolumn{1}{|c}{\texttt{0}} &
\multicolumn{1}{|c}{\texttt{n}} & \multicolumn{1}{||c|}{2}\\\hline
\multicolumn{1}{|c}{\texttt{****}} & \multicolumn{1}{|c}{\texttt{0}} &
\multicolumn{1}{|c}{\texttt{n}} & \multicolumn{1}{||c|}{2}\\\hline
\multicolumn{1}{|c}{\texttt{*}} & \multicolumn{1}{|c}{\texttt{+}} &
\multicolumn{1}{|c}{\texttt{n}} & \multicolumn{1}{||c|}{2}\\\hline
\multicolumn{1}{|c}{\texttt{**}} & \multicolumn{1}{|c}{\texttt{+}} &
\multicolumn{1}{|c}{\texttt{n}} & \multicolumn{1}{||c|}{2}\\\hline
\multicolumn{1}{|c}{\texttt{***}} & \multicolumn{1}{|c}{\texttt{+}} &
\multicolumn{1}{|c}{\texttt{n}} & \multicolumn{1}{||c|}{2}\\\hline
\multicolumn{1}{|c}{\texttt{****}} & \multicolumn{1}{|c}{\texttt{+}} &
\multicolumn{1}{|c}{\texttt{n}} & \multicolumn{1}{||c|}{2}\\\hline
\multicolumn{1}{|c}{\texttt{*}} & \multicolumn{1}{|c}{\texttt{-}} &
\multicolumn{1}{|c}{\texttt{y}} & \multicolumn{1}{||c|}{3}\\\hline
\multicolumn{1}{|c}{\texttt{**}} & \multicolumn{1}{|c}{\texttt{-}} &
\multicolumn{1}{|c}{\texttt{y}} & \multicolumn{1}{||c|}{3}\\\hline
\multicolumn{1}{|c}{\texttt{***}} & \multicolumn{1}{|c}{\texttt{-}} &
\multicolumn{1}{|c}{\texttt{y}} & \multicolumn{1}{||c|}{7}\\\hline
\multicolumn{1}{|c}{\texttt{****}} & \multicolumn{1}{|c}{\texttt{-}} &
\multicolumn{1}{|c}{\texttt{y}} & \multicolumn{1}{||c|}{8}\\\hline
\multicolumn{1}{|c}{\texttt{*}} & \multicolumn{1}{|c}{\texttt{0}} &
\multicolumn{1}{|c}{\texttt{y}} & \multicolumn{1}{||c|}{5}\\\hline
\multicolumn{1}{|c}{\texttt{**}} & \multicolumn{1}{|c}{\texttt{0}} &
\multicolumn{1}{|c}{\texttt{y}} & \multicolumn{1}{||c|}{5}\\\hline
\multicolumn{1}{|c}{\texttt{***}} & \multicolumn{1}{|c}{\texttt{0}} &
\multicolumn{1}{|c}{\texttt{y}} & \multicolumn{1}{||c|}{7}\\\hline
\multicolumn{1}{|c}{\texttt{****}} & \multicolumn{1}{|c}{\texttt{0}} &
\multicolumn{1}{|c}{\texttt{y}} & \multicolumn{1}{||c|}{8}\\\hline
\multicolumn{1}{|c}{\texttt{*}} & \multicolumn{1}{|c}{\texttt{+}} &
\multicolumn{1}{|c}{\texttt{y}} & \multicolumn{1}{||c|}{6}\\\hline
\multicolumn{1}{|c}{\texttt{**}} & \multicolumn{1}{|c}{\texttt{+}} &
\multicolumn{1}{|c}{\texttt{y}} & \multicolumn{1}{||c|}{6}\\\hline
\multicolumn{1}{|c}{\texttt{***}} & \multicolumn{1}{|c}{\texttt{+}} &
\multicolumn{1}{|c}{\texttt{y}} & \multicolumn{1}{||c|}{7}\\\hline
\multicolumn{1}{|c}{\texttt{****}} & \multicolumn{1}{|c}{\texttt{+}} &
\multicolumn{1}{|c}{\texttt{y}} & \multicolumn{1}{||c|}{8}\\\hline
&  &  &
\end{tabular}
\caption{Hotel example: the overall utility function}%
\label{table example OUF}%
\end{table}

First we apply Theorem~\ref{thm dnf for pseudo-Sugeno} to find the underlying
Sugeno integral:%
\begin{multline*}
q^{f}\left(  y_{1},y_{2},y_{3}\right)  =1\vee\left(  2\wedge y_{1}\right)
\vee\left(  2\wedge y_{2}\right)  \vee\left(  3\wedge y_{3}\right) \\
\vee\left(  2\wedge y_{1}\wedge y_{2}\right)  \vee\left(  8\wedge y_{1}\wedge
y_{3}\right)  \vee\left(  6\wedge y_{2}\wedge y_{3}\right)  \vee\left(
8\wedge y_{1}\wedge y_{2}\wedge y_{3}\right)  .
\end{multline*}
Since $1$ (resp. $8$) is the least (resp. greatest) element of $L$, this
polynomial function $q^{f}$ is indeed a Sugeno integral. We can simplify
$q^{f}$ by cancelling those terms which are absorbed by some other terms in
the disjunction:%
\[
q^{f}\left(  y_{1},y_{2},y_{3}\right)  =\left(  2\wedge y_{1}\right)
\vee\left(  2\wedge y_{2}\right)  \vee\left(  3\wedge y_{3}\right)
\vee\left(  y_{1}\wedge y_{3}\right)  \vee\left(  6\wedge y_{2}\wedge
y_{3}\right)  .
\]

We will be able to perform further simplifications after constructing the
local utility functions. Table~\ref{table example partitions} shows the
partitions of $L_{2}\times L_{3}=\mathcal{W}_{x_{1}}\cup\mathcal{L}_{x_{1}%
}\cup\mathcal{U}_{x_{1}}\cup\mathcal{E}_{x_{1}}$ corresponding to the four
possible elements $x_{1}\in L_{1}$. The numbers in parentheses are the values
of $f\left(  x_{1},x_{2},x_{3}\right)  $ (recall that we do not compute any
values for the sets $\mathcal{E}_{x_{1}}$); these are used to compute the
numbers $l_{x_{1}},w_{x_{1}},u_{x_{1}}$ shown in Table~\ref{table example LUF}%
. This table contains these data for all $x_{2}\in L_{2}$ and $x_{3}\in L_{3}$
as well, together with the values of $\varphi_{1}^{f}\left(  x_{1}\right)
,\varphi_{2}^{f}\left(  x_{2}\right)  ,\varphi_{3}^{f}\left(  x_{3}\right)  $.

\begin{table}[p]
\centering\renewcommand{\arraystretch}{1.2}
\begin{tabular}
[t]{ccccc}\hline
\multicolumn{1}{|c}{} & \multicolumn{1}{||c|}{\texttt{*}} &
\multicolumn{1}{|c}{\texttt{**}} & \multicolumn{1}{|c}{\texttt{***}} &
\multicolumn{1}{|c|}{\texttt{****}}\\\hline\hline
\multicolumn{1}{|c}{\texttt{\thinspace(-,n)\thinspace}} &
\multicolumn{1}{||l|}{$\ \mathcal{U}_{\text{\texttt{*}}}\left(  1\right)  \ $}
& \multicolumn{1}{|l|}{$\ \mathcal{L}_{\text{\texttt{**}}}\left(  2\right)
\ $} & \multicolumn{1}{|l|}{$\ \mathcal{L}_{\text{\texttt{***}}}\left(
2\right)  \ $} & \multicolumn{1}{|l|}{$\ \mathcal{L}_{\text{\texttt{****}}%
}\left(  2\right)  \ $}\\\hline
\multicolumn{1}{|c}{\texttt{\thinspace(0,n)\thinspace}} &
\multicolumn{1}{||l|}{$\ \mathcal{E}_{\text{\texttt{*}}}\ $} &
\multicolumn{1}{|l|}{$\ \mathcal{E}_{\text{\texttt{**}}}\ $} &
\multicolumn{1}{|l|}{$\ \mathcal{E}_{\text{\texttt{***}}}\ $} &
\multicolumn{1}{|l|}{$\ \mathcal{E}_{\text{\texttt{****}}}\ $}\\\hline
\multicolumn{1}{|c}{\texttt{\thinspace(+,n)\thinspace}} &
\multicolumn{1}{||l|}{$\ \mathcal{E}_{\text{\texttt{*}}}\ $} &
\multicolumn{1}{|l|}{$\ \mathcal{E}_{\text{\texttt{**}}}\ $} &
\multicolumn{1}{|l|}{$\ \mathcal{E}_{\text{\texttt{***}}}\ $} &
\multicolumn{1}{|l|}{$\ \mathcal{E}_{\text{\texttt{****}}}\ $}\\\hline
\multicolumn{1}{|c}{\texttt{\thinspace(-,y)\thinspace}} &
\multicolumn{1}{||l|}{$\ \mathcal{U}_{\text{\texttt{*}}}\left(  3\right)  \ $}
& \multicolumn{1}{|l|}{$\ \mathcal{U}_{\text{\texttt{**}}}\left(  3\right)
\ $} & \multicolumn{1}{|l|}{$\ \mathcal{W}_{\text{\texttt{***}}}\left(
7\right)  \ $} & \multicolumn{1}{|l|}{$\ \mathcal{L}_{\text{\texttt{****}}%
}\left(  8\right)  \ $}\\\hline
\multicolumn{1}{|c}{\texttt{\thinspace(0,y)\thinspace}} &
\multicolumn{1}{||l|}{$\ \mathcal{U}_{\text{\texttt{*}}}\left(  5\right)  \ $}
& \multicolumn{1}{|l|}{$\ \mathcal{U}_{\text{\texttt{**}}}\left(  5\right)
\ $} & \multicolumn{1}{|l|}{$\ \mathcal{W}_{\text{\texttt{***}}}\left(
7\right)  \ $} & \multicolumn{1}{|l|}{$\ \mathcal{L}_{\text{\texttt{****}}%
}\left(  8\right)  \ $}\\\hline
\multicolumn{1}{|c}{\texttt{\thinspace(+,y)\thinspace}} &
\multicolumn{1}{||l|}{$\ \mathcal{U}_{\text{\texttt{*}}}\left(  6\right)  \ $}
& \multicolumn{1}{|l|}{$\ \mathcal{U}_{\text{\texttt{**}}}\left(  6\right)
\ $} & \multicolumn{1}{|l|}{$\ \mathcal{W}_{\text{\texttt{***}}}\left(
7\right)  \ $} & \multicolumn{1}{|l|}{$\ \mathcal{L}_{\text{\texttt{****}}%
}\left(  8\right)  \ $}\\\hline
&  &  &  &
\end{tabular}
\caption{Hotel example: the partitions of $L_{2}\times L_{3}$}%
\label{table example partitions}%
\end{table}

\begin{table}[p]
\centering\renewcommand{\arraystretch}{1.2}
\begin{tabular}
[c]{c}%
\begin{tabular}
[t]{|p{0.75cm}||c|c|c|c|}\hline
& $\ l\ $ & $\ w\ $ & $\ u\ $ & $\ \varphi_{1}^{f}\ $\\\hline\hline
\texttt{\hspace*{\fill}*\hspace*{\fill}} &  &  & $1$ & $1$\\\hline
\texttt{\hspace*{\fill}**\hspace*{\fill}} & $2$ &  & $3$ & $2$\\\hline
\texttt{\hspace*{\fill}***\hspace*{\fill}} & $2$ & $7$ &  & $7$\\\hline
\texttt{\hspace*{\fill}****\hspace*{\fill}} & $8$ &  &  & $8$\\\hline
\end{tabular}
\\%
\begin{tabular}
[t]{|p{0.75cm}||c|c|c|c|}\hline
& $\ l\ $ & $\ w\ $ & $\ u\ $ & $\ \varphi_{2}^{f}\ $\\\hline\hline
\texttt{\hspace*{\fill}-\hspace*{\fill}} &  &  & $1$ & $1$\\\hline
\texttt{\hspace*{\fill}0\hspace*{\fill}} & $2$ & $5$ &  & $5$\\\hline
\texttt{\hspace*{\fill}+\hspace*{\fill}} & $6$ &  &  & $6$\\\hline
\end{tabular}
\\%
\begin{tabular}
[t]{p{0.75cm}cccc}\hline
\multicolumn{1}{|p{0.75cm}}{} & \multicolumn{1}{||c}{$\ l\ $} &
\multicolumn{1}{|c}{$\ w\ $} & \multicolumn{1}{|c}{$\ u\ $} &
\multicolumn{1}{|c|}{$\ \varphi_{3}^{f}\ $}\\\hline\hline
\multicolumn{1}{|p{0.75cm}}{\texttt{\hspace*{\fill}n\hspace*{\fill}}} &
\multicolumn{1}{||c}{} & \multicolumn{1}{|c}{} & \multicolumn{1}{|c}{$1$} &
\multicolumn{1}{|c|}{$1$}\\\hline
\multicolumn{1}{|p{0.75cm}}{\texttt{\hspace*{\fill}y\hspace*{\fill}}} &
\multicolumn{1}{||c}{$8$} & \multicolumn{1}{|c}{} & \multicolumn{1}{|c}{} &
\multicolumn{1}{|c|}{$8$}\\\hline
\multicolumn{1}{c}{} &  &  &  &
\end{tabular}
\end{tabular}
\caption{Hotel example: the local utility functions}%
\label{table example LUF}%
\end{table}

Now that we know that the greatest value of $\varphi_{2}^{f}$ is $6$, we can
simplify the Sugeno integral $q^{f}$ by replacing $6\wedge y_{2}\wedge y_{3}$
with $y_{2}\wedge y_{3}$, and \textquotedblleft factoring
out\textquotedblright\ $y_{1}\vee y_{2}$:%
\[
\left(  3\wedge y_{3}\right)  \vee\left(  \left(  y_{1}\vee y_{2}\right)
\wedge\left(  2\vee y_{3}\right)  \right)  =\operatorname{med}\left(  3\wedge
y_{3},y_{1}\vee y_{2},2\vee y_{3}\right)  .
\]
Note that this polynomial function is different from $q^{f}$, but it gives the
same overall utility function $f$. This example shows that the Sugeno integral
is not uniquely determined by $f$, and neither are the local utility functions
(e.g., we could have chosen $\varphi_{1}^{f}\left(  \text{\texttt{**}}\right)
=3$ according to Remark~\ref{rem fi order preserving}).

To better understand the behavior of $f$, let us separate two cases upon the
location of the hotel:%
\begin{align}
f\left(  x_{1},x_{2},x_{3}\right)   &  =\operatorname{med}\bigl(3\wedge
\varphi_{3}^{f}\left(  x_{3}\right)  ,\varphi_{1}^{f}\left(  x_{1}\right)
\vee\varphi_{2}^{f}\left(  x_{2}\right)  ,2\vee\varphi_{3}^{f}\left(
x_{3}\right)  \bigr)\label{eq analysis}\\
&  =\left\{
\begin{array}
[c]{rl}%
\bigl(\varphi_{1}^{f}\left(  x_{1}\right)  \vee\varphi_{2}^{f}\left(
x_{2}\right)  \bigr)\vee3,~ & \text{if }x_{3}=\text{\texttt{y}},\\
\bigl(\varphi_{1}^{f}\left(  x_{1}\right)  \vee\varphi_{2}^{f}\left(
x_{2}\right)  \bigr)\wedge2,~ & \text{if }x_{3}=\text{\texttt{n}}.
\end{array}
\right.  \nonumber
\end{align}
We can see from (\ref{eq analysis}) that once $x_{3}$ is fixed, what matters
is the higher one of $\varphi_{1}^{f}\left(  x_{1}\right)  $ and $\varphi
_{2}^{f}\left(  x_{2}\right)  $. Thus, instead of aiming at an average level
in both, a better strategy would be to maximize one of them. Moreover,
$\varphi_{1}^{f}$ either outputs very low or very high scores, whereas
$\varphi_{2}^{f}$ is almost maximized once the price is not very bad. Hence it
seems more reasonable to focus on service rather than on price. The third
variable can radically change the final outcome, but little can be done to
improve the location of the hotel.

\subsection{Proof of correctness\label{subsect correct}}

We assume that $L_{1},\ldots,L_{n},L$ are bounded chains, $L$ is complete,
$f\colon\prod_{i\in\lbrack n]}L_{i}\rightarrow L$ depends on all of its
variables and satisfies (\ref{eq BC for f}). If the output of algorithm SUFF
is not \textbf{false}, then it computes a Sugeno integral $q^{f}$ and
functions $\varphi_{k}^{f}\colon L_{k}\rightarrow L$ for each $k\in\left[
n\right]  $. It is clear from the construction that%
\begin{equation}
\varphi_{k}^{f}\left(  x_{k}\right)  =w_{x_{k}},~\varphi_{k}^{f}\left(
x_{k}\right)  \geq l_{x_{k}},~\varphi_{k}^{f}\left(  x_{k}\right)  \leq
u_{x_{k}} \label{eq consistency fif}%
\end{equation}
holds for all $k\in\left[  n\right]  ,x_{k}\in L_{k}$ (whenever the values on
the right hand sides are defined). To prove Theorem~\ref{thm alg correct} we
shall make use of two auxiliary lemmas. The first states that the functions
$\varphi_{k}^{f}$ are local utility functions, i.e., order-preserving functions.

\begin{lemma}
\label{lemma fi order preserving}If algorithm SUFF does not return the value
{\emph{\textbf{false}}}, then the functions $\varphi_{1}^{f},\ldots
,\varphi_{n}^{f}$ constructed by the algorithm are local utility functions.
\end{lemma}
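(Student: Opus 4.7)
The plan is to show that for any $k\in[n]$ and $x_k\leq x_k'$ in $L_k$ the algorithm's output satisfies $\varphi_k^f(x_k)\leq \varphi_k^f(x_k')$. The strategy transfers monotonicity from the (already verified) order-preserving $f$ down to the classifying sets $\mathcal{W}_{x_k},\mathcal{L}_{x_k},\mathcal{U}_{x_k},\mathcal{E}_{x_k}$, then to the derived quantities $w_{x_k},l_{x_k},u_{x_k}$, and finally to $\varphi_k^f$ via a short case analysis on the rule $($W$)$, $($L$)$, $($U$)$, $($LU$)$ that applies at each argument.

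First I would fix $\mathbf{y}=(x_1,\ldots,x_{k-1},x_{k+1},\ldots,x_n)$ and write $s=f(\mathbf{y}_k^0)$, $t=f(\mathbf{y}_k^1)$, $v=f(\mathbf{y}_k^{x_k})$, $v'=f(\mathbf{y}_k^{x_k'})$. Since $f$ is order-preserving, $s\leq v\leq v'\leq t$, and inspecting the defining inequalities of the four classes yields $\mathcal{L}_{x_k}\subseteq \mathcal{L}_{x_k'}$, $\mathcal{U}_{x_k'}\subseteq \mathcal{U}_{x_k}$, and $\mathcal{E}_{x_k}\subseteq \mathcal{E}_{x_k'}$. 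Moreover, on $\mathcal{L}_{x_k}$ one has $v=v'=t$, and on $\mathcal{U}_{x_k'}$ one has $v=v'=s$, so the values of $f(\mathbf{y}_k^{x_k})$ and $f(\mathbf{y}_k^{x_k'})$ coincide on the respective smaller sets. Hence $\mathcal{L}_{x_k}^f\subseteq \mathcal{L}_{x_k'}^f$ and $\mathcal{U}_{x_k'}^f\subseteq \mathcal{U}_{x_k}^f$, giving $l_{x_k}\leq l_{x_k'}$ and $u_{x_k}\leq u_{x_k'}$ whenever both sides are defined.

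With these monotonicity facts, I would finish by a case analysis on the rules at $x_k$ and $x_k'$. The cases where both $\varphi$-values come from $l$ or both from $u$ are immediate from the previous step. For the mixed cases I would pick a witness in the relevant set at $x_k'$ and classify it with respect to $x_k$, using the inclusions above to restrict the possibilities. For example, if $\varphi_k^f(x_k)=l_{x_k}$ is produced by rule $($LU$)$ and $\varphi_k^f(x_k')=w_{x_k'}$ by rule $($W$)$, then a witness $\mathbf{y}'\in\mathcal{W}_{x_k'}$ cannot lie in $\mathcal{W}_{x_k}=\emptyset$ nor in $\mathcal{E}_{x_k}\subseteq \mathcal{E}_{x_k'}$, and cannot lie in $\mathcal{L}_{x_k}$ either (otherwise $f(\mathbf{y}'^{\,x_k}_k)=t'$ would exceed $f(\mathbf{y}'^{\,x_k'}_k)<t'$); so it lies in $\mathcal{U}_{x_k}$, yielding $u_{x_k}\leq f(\mathbf{y}'^{\,0}_k)<w_{x_k'}$, and the sanity check at line \ref{algline l<w<u not OK} then gives $l_{x_k}\leq u_{x_k}<w_{x_k'}$. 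The same kind of argument handles $($W$)\to($W$)$, $($W$)\to($L$)$, $($U$)\to($W$)$, $($U$)\to($L$)$; and one checks that $($L$)\to($W$)$ is impossible, by the same contradiction applied to a hypothetical witness.

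The main obstacle is precisely bridging between different rules: unlike $\mathcal{L}$ and $\mathcal{U}$, the family $\mathcal{W}_{x_k}$ is not monotone in $x_k$ (its members can migrate into $\mathcal{L}_{x_k'}$ as $x_k$ grows), so $w$-values cannot be compared directly and one must route the inequality through $l_{x_k}$ and $u_{x_k}$. Doing this cleanly requires combining the set inclusions obtained in the second step with the consistency chain $l_{x_k}\leq w_{x_k}\leq u_{x_k}$ that the algorithm has already verified. Once this bookkeeping is set up, each of the finitely many rule combinations reduces to a two-line witness argument.
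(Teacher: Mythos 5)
Your argument is correct and takes essentially the same route as the paper's proof: both transfer the order-preservation of $f$ through the sets $\mathcal{W}_{x_k},\mathcal{L}_{x_k},\mathcal{U}_{x_k},\mathcal{E}_{x_k}$, classify a witness tuple at one argument with respect to the other, and invoke the consistency chain $l\leq w\leq u$ already checked by the algorithm; your bookkeeping (global monotonicity of $l$ and $u$ plus a rule-pair case analysis) corresponds to the paper's three-way split on the emptiness of $\mathcal{W}_{a}$ and $\mathcal{W}_{b}$. The rule pairs you leave unlisted, e.g.\ $(\mathrm{W})\to(\mathrm{U})$ and $(\mathrm{L})\to(\mathrm{U})$, are impossible by the same inclusions and witness classification, so there is no gap.
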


\begin{proof}
We show that $\varphi_{1}^{f}$ is order-preserving; the other cases can be
treated similarly. Let $a,b\in L_{1}$ such that $a\leq b$. Assume first that
$\mathcal{W}_{a}\neq\emptyset$, and fix an arbitrary $\left(  x_{2}%
,\ldots,x_{n}\right)  \in\mathcal{W}_{a}$. Then $\varphi_{1}^{f}\left(
a\right)  =w_{a}=f\left(  a,x_{2},\ldots,x_{n}\right)  $, and since $f$ is
order-preserving, by the definition of $\mathcal{W}_{a}$, it follows that%
\[
f\left(  0,x_{2},\ldots,x_{n}\right)  <f\left(  a,x_{2},\ldots,x_{n}\right)
\leq f\left(  b,x_{2},\ldots,x_{n}\right)  \leq f\left(  1,x_{2},\ldots
,x_{n}\right)  .
\]
If $f\left(  b,x_{2},\ldots,x_{n}\right)  <f\left(  1,x_{2},\ldots
,x_{n}\right)  $ then $\left(  x_{2},\ldots,x_{n}\right)  \in\mathcal{W}_{b}$,
hence, by (\ref{eq consistency fif}) and (\ref{eq w}) we have $\varphi_{1}%
^{f}\left(  b\right)  =w_{b}=f\left(  b,x_{2},\ldots,x_{n}\right)  $. If
$f\left(  b,x_{2},\ldots,x_{n}\right)  =f\left(  1,x_{2},\ldots,x_{n}\right)
$, then $\left(  x_{2},\ldots,x_{n}\right)  \in\mathcal{L}_{b}$, therefore we
have $\varphi_{1}^{f}\left(  b\right)  \geq l_{b}\geq f\left(  b,x_{2}%
,\ldots,x_{n}\right)  $ by (\ref{eq consistency fif}) and (\ref{eq l}). In
both cases we obtain that%
\[
\varphi_{1}^{f}\left(  a\right)  =w_{a}=f\left(  a,x_{2},\ldots,x_{n}\right)
\leq f\left(  b,x_{2},\ldots,x_{n}\right)  \leq\varphi_{1}^{f}\left(
b\right)  ,
\]
since $f$ is order-preserving.

The case $\mathcal{W}_{b}\neq\emptyset$ can be treated similarly. Let us now
consider the remaining case $\mathcal{W}_{a}=\mathcal{W}_{b}=\emptyset$. Then%
\[
\mathcal{L}_{a}\cup\mathcal{U}_{a}=L_{2}\times\cdots\times L_{n}%
\setminus\mathcal{E}_{a}=L_{2}\times\cdots\times L_{n}\setminus\mathcal{E}%
_{b}=\mathcal{L}_{b}\cup\mathcal{U}_{b}.
\]
Furthermore, from $a\leq b$ we can conclude that $\mathcal{L}_{a}%
\subseteq\mathcal{L}_{b}$ and $\mathcal{U}_{a}\supseteq\mathcal{U}_{b}$ by
making use of the fact that $f$ is order-preserving. This implies that either
$\mathcal{L}_{a}\subset\mathcal{L}_{b}$ and $\mathcal{U}_{a}\supset
\mathcal{U}_{b}$, or $\mathcal{L}_{a}=\mathcal{L}_{b}$ and $\mathcal{U}%
_{a}=\mathcal{U}_{b}$. In the first case, by choosing an arbitrary $\left(
x_{2},\ldots,x_{n}\right)  \in\mathcal{L}_{b}\setminus\mathcal{L}%
_{a}=\mathcal{U}_{a}\setminus\mathcal{U}_{b}$ we obtain the desired inequality
with the help of (\ref{eq l}), (\ref{eq u}) and (\ref{eq consistency fif}):%
\[
\varphi_{1}^{f}\left(  a\right)  \leq u_{a}\leq f\left(  a,x_{2},\ldots
,x_{n}\right)  \leq f\left(  b,x_{2},\ldots,x_{n}\right)  \leq l_{b}%
\leq\varphi_{1}^{f}\left(  b\right)  \text{.}%
\]

In the second case, we claim that $f\left(  a,x_{2},\ldots,x_{n}\right)
=f\left(  b,x_{2},\ldots,x_{n}\right)  $ for all $\left(  x_{2},\ldots
,x_{n}\right)  \in L_{2}\times\cdots\times L_{n}$. This is clear if $\left(
x_{2},\ldots,x_{n}\right)  \in\mathcal{E}_{a}=\mathcal{E}_{b}$. If $\left(
x_{2},\ldots,x_{n}\right)  \in\mathcal{L}_{a}=\mathcal{L}_{b}$, then
\[
f\left(  a,x_{2},\ldots,x_{n}\right)  =f\left(  1,x_{2},\ldots,x_{n}\right)
=f\left(  b,x_{2},\ldots,x_{n}\right)  .
\]
If $\left(  x_{2},\ldots,x_{n}\right)  \in\mathcal{U}_{a}=\mathcal{U}_{b}$,
then
\[
f\left(  a,x_{2},\ldots,x_{n}\right)  =f\left(  0,x_{2},\ldots,x_{n}\right)
=f\left(  b,x_{2},\ldots,x_{n}\right)  .
\]
Thus $\mathcal{L}_{a}^{f}=\mathcal{L}_{b}^{f}$ and $\mathcal{U}_{a}%
^{f}=\mathcal{U}_{b}^{f}$, hence $l_{a}=l_{b}$ and $u_{a}=u_{b}$ (whenever
they are defined). Therefore $\varphi_{1}^{f}\left(  a\right)  $ and
$\varphi_{1}^{f}\left(  b\right)  $ coincide, no matter which rule (L),(U) or
(LU) was used to compute their values.
\end{proof}

\begin{remark}
\label{rem fi order preserving} We can see from the proof of the above lemma
that (LU) can be relaxed: $\varphi_{1}^{f}\left(  x_{1}\right)  $ could be
chosen to be any element of the interval $\left[  l_{x_{1}},u_{x_{1}}\right]
$ with the convention that whenever we encounter the same interval for
different values of $x_{1}$, we always choose the same element of this
interval. This guarantees that $\varphi_{1}^{f}$ will be order-preserving. The
proof of Lemma~\ref{lemma last} below works with this relaxed rule, since it
relies only on the fact that $\varphi_{1}^{f}\left(  x_{1}\right)  \in\left[
l_{x_{1}},u_{x_{1}}\right]  $ whenever $\varphi_{1}^{f}\left(  x_{1}\right)  $
is determined by rule the (LU).
\end{remark}

\begin{lemma}
\label{lemma last}Algorithm SUFF does not return the value
{\emph{\textbf{false}}} if and only if $f$ is an order-preserving
pseudo-Sugeno integral. In this case $f$ is pseudo-median decomposable w.r.t.
$\varphi_{1}^{f},\ldots,\varphi_{n}^{f}$.
\end{lemma}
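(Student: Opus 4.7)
The plan is to verify the two directions separately, with the heart of the argument being a case analysis on which of $\mathcal{E}_{x_k},\mathcal{W}_{x_k},\mathcal{L}_{x_k},\mathcal{U}_{x_k}$ contains a given $\left(x_j\right)_{j\neq k}$.

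For the direction assuming $f$ is an order-preserving pseudo-Sugeno integral, write $f\left(\mathbf{x}\right)=q\bigl(\varphi_{1}\left(x_{1}\right),\ldots,\varphi_{n}\left(x_{n}\right)\bigr)$. Fix $k\in\left[n\right]$ and $\left(x_{j}\right)_{j\neq k}$; the unary function $y\mapsto q(\varphi_{1}(x_{1}),\ldots,y,\ldots,\varphi_{n}(x_{n}))$ is of the form (\ref{eq unary pol}) with $s=f(\mathbf{x}_{k}^{0})$ and $t=f(\mathbf{x}_{k}^{1})$, so $f\left(\mathbf{x}\right)=\operatorname{med}\left(s,\varphi_{k}\left(x_{k}\right),t\right)$. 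Case analysis shows: if $\left(x_{j}\right)_{j\neq k}\in\mathcal{W}_{x_{k}}$ then $s<f(\mathbf{x})<t$, forcing $f\left(\mathbf{x}\right)=\varphi_{k}\left(x_{k}\right)$; if $\left(x_{j}\right)_{j\neq k}\in\mathcal{L}_{x_{k}}$ then $f\left(\mathbf{x}\right)=t$, forcing $\varphi_{k}\left(x_{k}\right)\geq f\left(\mathbf{x}\right)$; if $\left(x_{j}\right)_{j\neq k}\in\mathcal{U}_{x_{k}}$ then $f\left(\mathbf{x}\right)=s$, forcing $\varphi_{k}\left(x_{k}\right)\leq f\left(\mathbf{x}\right)$. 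Consequently $\mathcal{W}_{x_{k}}^{f}\subseteq\left\{\varphi_{k}\left(x_{k}\right)\right\}$, so the test in line~\ref{algline W not OK} passes, and $l_{x_{k}}\leq\varphi_{k}\left(x_{k}\right)\leq u_{x_{k}}$ with $w_{x_{k}}=\varphi_{k}\left(x_{k}\right)$ whenever these values are defined, so the test in line~\ref{algline l<w<u not OK} passes. Order-preservation of $f$ and essentiality of all variables are given, so the algorithm does not return \textbf{false}.

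For the converse, assume the algorithm terminates without returning \textbf{false}. By Lemma~\ref{lemma fi order preserving} the $\varphi_{k}^{f}$ are local utility functions. To prove the pseudo-median decomposition with respect to these functions, fix $k$ (say $k=1$) and $\mathbf{x}\in\prod_{i\in\left[n\right]}L_{i}$, and write $s=f(\mathbf{x}_{1}^{0}),\ v=f\left(\mathbf{x}\right),\ t=f(\mathbf{x}_{1}^{1})$; by order-preservation $s\leq v\leq t$. In the case $\left(x_{2},\ldots,x_{n}\right)\in\mathcal{E}_{x_{1}}$ one has $s=v=t$ and the identity is trivial. In $\mathcal{W}_{x_{1}}$, the fact that $\left|\mathcal{W}_{x_{1}}^{f}\right|\leq 1$ together with $\varphi_{1}^{f}\left(x_{1}\right)=w_{x_{1}}$ forces $\varphi_{1}^{f}\left(x_{1}\right)=v$, and since $s<v<t$ the median equals $v$. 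In $\mathcal{L}_{x_{1}}$ we have $v=t$ and $v\in\mathcal{L}_{x_{1}}^{f}$, so $\varphi_{1}^{f}\left(x_{1}\right)\geq l_{x_{1}}\geq v=t$ by~(\ref{eq consistency fif}) and~(\ref{eq l}), whence $\operatorname{med}\left(s,\varphi_{1}^{f}\left(x_{1}\right),t\right)=t=v$. The case $\mathcal{U}_{x_{1}}$ is dual: $\varphi_{1}^{f}\left(x_{1}\right)\leq u_{x_{1}}\leq v=s$ gives the median equal to $s=v$.

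Hence $f$ is pseudo-median decomposable with respect to $\varphi_{1}^{f},\ldots,\varphi_{n}^{f}$, and Theorem~\ref{thm:Med} then yields that $f$ is a pseudo-Sugeno integral; it is order-preserving by assumption (checked on line~\ref{algline OP not OK}). The main technical obstacle I anticipate is not any single computation but keeping the inequalities straight in the $\mathcal{L}$ and $\mathcal{U}$ cases; in particular, one must observe that membership of a tuple in $\mathcal{L}_{x_{1}}$ (respectively $\mathcal{U}_{x_{1}}$) automatically places the corresponding $f$-value into $\mathcal{L}_{x_{1}}^{f}$ (respectively $\mathcal{U}_{x_{1}}^{f}$), so the supremum $l_{x_{1}}$ (infimum $u_{x_{1}}$) really does dominate (is dominated by) $v$, which is what makes the pseudo-median decomposition close in those boundary cases without using the value $w_{x_{1}}$ at all.
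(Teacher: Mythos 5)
Your proposal is correct and follows essentially the same route as the paper: both directions hinge on the pseudo-median decomposition and a case analysis over the partition $\mathcal{W}_{x_k}\cup\mathcal{L}_{x_k}\cup\mathcal{U}_{x_k}\cup\mathcal{E}_{x_k}$, with Lemma~\ref{lemma fi order preserving} supplying the boundary conditions and Theorem~\ref{thm:Med} converting decomposability into the pseudo-Sugeno property. The only difference is cosmetic: where the paper's sufficiency direction simply appeals to the ``window'' observations of Subsection~\ref{subsect algo}, you spell out explicitly why \eqref{eq consistency fi} holds, which is a faithful elaboration rather than a new argument.
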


\begin{proof}
For the sufficiency, let us suppose that $f\left(  \mathbf{x}\right)
=q\left(  \varphi_{1}\left(  x_{1}\right)  ,\ldots,\varphi_{n}\left(
x_{n}\right)  \right)  $ is an order-preserving pseudo-Sugeno integral, where
$q$ is a Sugeno integral and each $\varphi_{i}$ satisfies the boundary
conditions. Clearly, algorithm SUFF will not return \textbf{false} in
line~\ref{algline OP not OK}. Let us note that in the considerations of
Subsection~\ref{subsect algo} we did not make use of the fact that each
$\varphi_{k}$ is order-preserving, only the order-preservation of $f$, and the
pseudo-median decomposition was used. Since the latter holds for pseudo-Sugeno
integrals, the observations in Subsection~\ref{subsect algo} still hold for
$f$. In particular, (\ref{eq consistency fi}) holds for $f$, and this means
that the algorithm will not return \textbf{false} in
lines~\ref{algline W not OK} and \ref{algline l<w<u not OK}. Finally, since
$f$ is assumed to depend on all of its variables,
line~\ref{algline ess not OK} will not return \textbf{false} either.

For the necessity, let us assume that algorithm SUFF does not return
\textbf{false}. Then $f$ is clearly order-preserving, and by
Lemma~\ref{lemma fi order preserving}, the functions $\varphi_{1}^{f}%
,\ldots,\varphi_{n}^{f}$ are also order-preserving (hence they satisfy the
boundary conditions). By Theorem~\ref{thm:Med}, to prove that $f$ is a
pseudo-Sugeno integral it suffices to show that $f$ is pseudo-median
decomposable w.r.t. $\varphi_{1}^{f},\ldots,\varphi_{n}^{f}$. As in the proof
of the previous lemma, we focus on the first variable.

We need to show that%
\begin{equation}
\operatorname{med}\bigl(f\left(  0,x_{2},\ldots,x_{n}\right)  ,\varphi_{1}%
^{f}\left(  x_{1}\right)  ,f\left(  1,x_{2},\ldots,x_{n}\right)
\bigr)=f\left(  x_{1},x_{2},\ldots,x_{n}\right)  .\label{eq f'=f}%
\end{equation}
holds identically. We separate four cases with respect to the partition of
$L_{2}\times\cdots\times L_{n}=\mathcal{W}_{x_{1}}\cup\mathcal{L}_{x_{1}}%
\cup\mathcal{U}_{x_{1}}\cup\mathcal{E}_{x_{1}}$.

If $\left(  x_{2},\ldots,x_{n}\right)  \in\mathcal{W}_{x_{1}}$, then
$\varphi_{1}^{f}\left(  x_{1}\right)  =w_{x_{1}}=f\left(  x_{1},x_{2}%
,\ldots,x_{n}\right)  $ and%
\[
f\left(  0,x_{2},\ldots,x_{n}\right)  <f\left(  x_{1},x_{2},\ldots
,x_{n}\right)  <f\left(  1,x_{2},\ldots,x_{n}\right)  .
\]
Therefore $f\left(  0,x_{2},\ldots,x_{n}\right)  <\varphi_{1}^{f}\left(
x_{1}\right)  <f\left(  1,x_{2},\ldots,x_{n}\right)  $, hence the left hand
side of (\ref{eq f'=f}) is $\varphi_{1}^{f}\left(  x_{1}\right)  =f\left(
x_{1},x_{2},\ldots,x_{n}\right)  $.

If $\left(  x_{2},\ldots,x_{n}\right)  \in\mathcal{L}_{x_{1}}$, then
$\varphi_{1}^{f}\left(  x_{1}\right)  \geq l_{x_{1}}$ according to
(\ref{eq consistency fif}). Then by (\ref{eq l}) and by the definition of
$\mathcal{L}_{x_{1}}$ we get
\[
\varphi_{1}^{f}\left(  x_{1}\right)  \geq l_{x_{1}}\geq f\left(  x_{1}%
,x_{2},\ldots,x_{n}\right)  =f\left(  1,x_{2},\ldots,x_{n}\right)  .
\]
Therefore, both sides of (\ref{eq f'=f}) are equal to $f\left(  1,x_{2}%
,\ldots,x_{n}\right)  $.

The case $\left(  x_{2},\ldots,x_{n}\right)  \in\mathcal{U}_{x_{1}}$ follows
similarly. Finally, if $\left(  x_{2},\ldots,x_{n}\right)  \in\mathcal{E}%
_{x_{1}}$, then%
\[
f\left(  0,x_{2},\ldots,x_{n}\right)  =f\left(  x_{1},x_{2},\ldots
,x_{n}\right)  =f\left(  1,x_{2},\ldots,x_{n}\right)  ,
\]
hence (\ref{eq f'=f}) holds independently of the value of $\varphi_{1}%
^{f}\left(  x_{1}\right)  $.
\end{proof}

Lemmas~\ref{lemma fi order preserving} and \ref{lemma last}  together with
Theorem~\ref{thm dnf for pseudo-Sugeno} immediately yield
Theorem~\ref{thm alg correct}.

\section*{Acknowledgments.}

We would like to thank Jean-Luc Marichal for useful discussions and for
bringing this topic to our attention. The second named author acknowledges
that the present project is supported by the National Research Fund,
Luxembourg, and cofunded under the Marie Curie Actions of the European
Commission (FP7-COFUND), and supported by the Hungarian National Foundation
for Scientific Research under grant no. K77409.


\begin{thebibliography}{99}                                                                                               %


\bibitem {BelPraCal07}Beliakov, G., Pradera, A., Calvo, T.: Aggregation
Functions: A Guide for Practitioners, Studies in Fuzziness and Soft Computing,
vol.\ 221, Springer, Berlin (2007)

\bibitem {BouDubPraPir09}Bouyssou, D., Dubois, D., Prade, H., Pirlot, M.
(eds): Decision-Making Process - Concepts and Methods. ISTE/John Wiley (2009)



\bibitem {BurSan81}Burris, S., Sankappanavar, H.~P.: A Course in Universal
Algebra, vol.\ 78, Graduate Texts in Mathematics. Springer-Verlag, New York (1981)

\bibitem {CouMar0}Couceiro, M., Marichal, J.-L.: Polynomial Functions over
Bounded Distributive Lattices. Journal of Multiple-Valued Logic and Soft
Computing (to appear) \url{http://arxiv.org/abs/0901.4888}

\bibitem {CouMar1}Couceiro, M., Marichal, J.-L.: Characterizations of Discrete
{S}ugeno Integrals as Polynomial Functions over Distributive Lattices. Fuzzy
Sets and Systems 161:5 694--707 (2010)

\bibitem {CouMar2}Couceiro, M., Marichal, J.-L.: Representations and
Characterizations of Polynomial Functions on Chains. Journal of
Multiple-Valued Logic and Soft Computing 16:1-2 65--86 (2010)

\bibitem {CouMar3}Couceiro, M., Marichal, J.-L.: Axiomatizations of
Quasi-Polynomial Functions on Bounded Chains. Aequationes Mathematicae 396:1
195--213 (2009)


\bibitem {CW1}Couceiro, M., Waldhauser, T.: Sugeno Utility Functions I:
Axiomatizations. Lecture Notes in Artificial Intelligence, vol. 6408,
Springer, 2010, pp. 79-90,

\bibitem {CW2}Couceiro, M., Waldhauser, T.: Sugeno Utility Functions II:
Factorizations. Lecture Notes in Artificial Intelligence, vol. 6408, Springer,
2010, pp. 91-103


\bibitem {DubMarPraRouSab01}Dubois, D., Marichal, J.-L., Prade, H., Roubens,
M., Sabbadin, R.: The Use of the Discrete {S}ugeno Integral in
Decision-Making: a Survey. Internat. J. Uncertain. Fuzziness Knowledge-Based
Systems, 9(5):539--561 (2001)

\bibitem {Goo67}Goodstein, R.~L.: The Solution of Equations in a Lattice.
Proc. Roy. Soc. Edinburgh Section A, 67:231--242 (1965/1967)

\bibitem {Grae03}Gr\"atzer, G.: General Lattice Theory. Birkh\"auser Verlag,
Berlin (2003)

\bibitem {Gra}Grabisch, M.: The application of fuzzy integrals in
multicriteria decision making. Eur. J. Oper. Res. 89(3), 445--456 (1996)

\bibitem {GraMarMesPap09}Grabisch, M., Marichal, J.-L., Mesiar, R., Pap, E.:
Aggregation Functions. Encyclopedia of Mathematics and its Applications.
Cambridge University Press, Cambridge, UK (2009)

\bibitem {GraMurSug00}Grabisch, M., Murofushi, T., Sugeno, M. (eds): Fuzzy
Measures and Integrals - Theory and Applications, volume~40 of Studies in
Fuzziness and Soft Computing. Physica-Verlag, Heidelberg (2000)

\bibitem {Mar00}Marichal, J.-L.: On Sugeno integral as an aggregation
function. Fuzzy Sets and Systems 114, 347--365 (2000)

\bibitem {Mar09}Marichal, J.-L.: Weighted Lattice Polynomials. Discrete
Mathematics 309(4), 814--820 (2009)

\bibitem {Rud01}Rudeanu, S.: Lattice Functions and Equations. Springer Series
in Discrete Mathematics and Theoretical Computer Science. Springer-Verlag,
London (2001)

\bibitem {Sug74}Sugeno, M.: Theory of Fuzzy Integrals and its Applications.
PhD thesis, Tokyo Institute of Technology, Tokyo (1974)

\bibitem {Sug77}Sugeno, M.: Fuzzy Measures and Fuzzy Integrals--a Survey. In:
Gupta, M.~M., Saridis, G.~N., Gaines, B.~R., (eds), Fuzzy automata and
decision processes, 89--102. North-Holland, New York (1977)
\end{thebibliography}
\end{document}